\newtheorem{theorem}{Theorem}
\newtheorem{lemma}{Lemma}
\newtheorem{remark}{Remark}
\DeclareMathOperator{\dive}{div}
\newcommand{\R}{\mathbb{R}}
\newcommand{\al}{\alpha}
\newcommand{\eps}{\varepsilon}
\renewcommand{\j}{{\bf j}}
\newcommand{\jone}{{\bf j}_1}
\newcommand{\jtwo}{{\bf j}_2}
\renewcommand{\L}{{\bf L}}
\newcommand{\Leps}{{\bf L_\eps}}
\newcommand{\x}{{\bf x}}
\newcommand{\y}{{\bf y}}
\newcommand{\n}{{\bf n}}
\newcommand{\B}{{\bf B}}
\newcommand{\BS}{{\bf BS}}
\newcommand{\e}{{\bf e}}
\newcommand{\h}{{\bf h}}
\newcommand{\z}{{\bf z}}
\author[1]{R\'emi Robin\thanks{remi.robin@inria.fr}$^,$ }
\author[2]{Francesco Volpe \thanks{francesco.volpe@renfusion.eu}}
\affil[1]{\footnotesize Laboratoire Jacques-Louis Lions, Sorbonne Universit\'e}
\affil[2]{\footnotesize Renaissance Fusion, https://stellarator.energy}
\title{Minimization of magnetic forces on Stellarator coils}
\begin{document}
\maketitle
\begin{abstract}
    Magnetic confinement devices for nuclear
    fusion can be large and expensive. Compact stellarators are promising  
    candidates for cost-reduction, but introduce new difficulties: 
    confinement in smaller volumes requires higher magnetic field, which calls
    for higher coil-currents and ultimately causes higher Laplace forces on the
    coils - if everything else remains the same. 
    This motivates the inclusion of force reduction in
    stellarator coil optimization. 
    In the present paper
    we consider a coil winding surface, we prove that
    there is a natural and rigorous way to define the Laplace force (despite
    the magnetic field discontinuity across the current-sheet), 
    we provide examples of cost associated (peak force,
    surface-integral of the force squared) and discuss easy generalizations
    to parallel and normal force-components, as these will be subject to
    different engineering constraints. 
    Such costs can then be easily added to the 
    figure of merit in any multi-objective stellarator coil optimization
    code. We demonstrate this for a generalization of the 
    {\tt REGCOIL} code \cite{Landreman}, which we rewrote in python,
    and provide numerical examples for the NCSX (now QUASAR) design.
    We present results for various definitions of the cost function, 
    including peak force reductions by up to 40 \%,
    and outline future work for further reduction.
\end{abstract}

\section{Introduction}
Stellarators are non-axisymmetric
toroidal devices that magnetically confine fusion plasmas 
\cite{Helander}. 
Thanks to specially shaped coils they do not require a current in the plasma,
hence are more stable and steady-state than tokamaks. However, they exhibit
comparable confinement, hence tend to be about as large. Like tokamaks,
confinement can be improved (and size reduced) by adopting stronger magnetic
fields.

Fields as high as 8-12 T were only tested in two series of
tokamak experiments at the MIT and ENEA, culminated respectively
in Alcator C-mod \cite{Marmar} and FTU \cite{Pucella}.
For comparison, ITER has a field of 5.3 T on axis.
Other high-field tokamaks were designed but not built \cite{Coppi,Meade},
although the new high-field SPARC tokamak has been designed, modeled and its
construction is expected to start in 2021 \cite{Creely}. 

For stellarators and heliotrons, there is broad agreement that power-plants
will require at least 4-6 T \cite{Sagara}, but  
fields as high as 8-12 T have only been proposed very recently
\cite{Queral}. Two private companies are working toward that goal
\cite{TypeOne,RenFus}. 

Generating strong fields requires high currents and of course results in high
forces on the coils (unless their design is modified, as we will argue 
in this paper). Up to 5 T, the issue can be resolved by adequately
reinforced coil-support structures and coil-spacers \cite{Schauer}. 
However, a further increase to 10 T will result in 4$\times$ higher forces.
This calls for including force-reduction in the coil design and
optimization process, along with other criteria.

Such need was recognized earlier on for heliotrons, and spurred reduced force  
(so-called force-free) heliotron designs \cite{Imagawa}. 
From a mathematical standpoint this is not too surprising, since helical
fields in heliotrons resemble the eigenfunctions of the curl operator on a
torus \cite{AlonsoRodriguez}: $\nabla \times {\bf B} = \lambda {\bf B}$. 
This, combined with Maxwell-Ampere law, implies that $\bf B$ and the current
  density ${\bf j}$ are parallel, and there are no Laplace forces on the coils. 

  Modular coils for advanced stellarators, on the other hand, are the result
  of numerical optimization. 
The most common optimization criterion is to reproduce the
target magnetic field to within one part in $10^4$ or $10^5$. 
Typically this is solved on a 2-D toroidal surface conformal to the
plasma boundary, called Coil Winding Surface (CWS).  On that surface,
numerical codes compute the current potential (thus, ultimately, the
current pattern) that best reproduces the target plasma boundary, in a
least-squares sense \cite{Landreman}. This is the principle of the
seminal {\tt NESCOIL} code \cite{Merkel}. Further developments included
engineering-constrained nonlinear optimizers \cite{Strickler} and the
Tikhonov-regularized {\tt REGCOIL} \cite{Landreman}.
The latter includes the squared coil-current density in the objective function,
which leads to more ``gentle'', easier-to-build coil shapes. 
All these codes fix the CWS; 
more recently, a free-CWS 3-D search method was developed \cite{Zhu}. 

In the present article, we generalize {\tt REGCOIL} to include coil-force reduction.
This is obtained by adding a third term to the objective function, quantifying
the Laplace forces on the CWS. Several metrics are possible, for example
the surface-integral of the squared Laplace force, or the peak value of the
Laplace force.
We recall that the Laplace forces are a self-interaction $\L$ of a 
surface-current (of density $\bf j$)
with itself. To that end, first we introduce the force $\L$ exerted by a
surface-current of density $\jone$ on a
surface-current of density $\jtwo$.

The paper is organized as follows. 
The Laplace force on a current-sheet is rigorously derived in
Eqs.~\ref{eq:L1}-\ref{eq:L4} of Section \ref{sec:def_Laplace_force}. 
The expression obtained is not overly expensive from a numerical standpoint.
Based on that, possible cost functions are proposed and briefly discussed in 
Section \ref{sec:costs}. Finally 
section \ref{sec:numerics} illustrates the numerical results
obtained with the two main cost-functions for the quasi-axisymmetric
stellarator design formerly known as NCSX, then QUASAR,
including a reduction of the peak force by up to $40 \%$.

\section{Laplace force on a surface}
\label{sec:def_Laplace_force}
\subsection{Limit definition of Laplace force exerted by a
  current-sheet on itself}
In the following, $S$ denotes the CWS and $\n$ the unit vector field normal to
$S$ and pointing outward. 
$\j$ is a vector field on $S$, representing the surface-current
density, i.e.~the current per unit length (not per unit surface, as is usually
the case for this notation). 
The Laplace force is the magnetic component of the Lorentz
force; the Laplace force per unit {\em surface} (not per unit volume) is given
by ${\bf F}=\j \times {\bf B}$, although here, quite often, it will simply
be called 'force', for brevity.

It is well-known that a surface-current causes a discontinuity in the
tangential component of the magnetic field, given by the interface
condition $\n_{12}\times ({\bf H_2}-{\bf H_1})=\j$. 
The resulting jump in the tangential component of $\B$ results in a normal
force wherever $\j \not =0$. That force, 
proportional to $ |\j| ^2 $, tries to increase the thickness of the
CWS. To ensure that that force remains reasonably small, 
one can easily add a cost $|\j|_{L^2}$ or $|\j|_\infty$ to the
multi-objective figure-of-merit or optimize under a constraint on
$\j$.

>From now on, though, we will focus on the other contributions to the
Laplace force. We define them in a location $\y\in S$ as follows:
\[
{\bf F}(\y)=
\lim_{\eps \to 0} \frac{1}{2} \{
\j(\y)\times \big[ \B(\y+\eps \n(\y))+\B(\y-\eps \n(\y)) \big]
\}.
\]
Let us focus on the case where $\B$ is only generated by currents on $S$; there
are no permanent magnets nor magnetically susceptible media. 
In any $\y \not \in S$, the field is given by the Biot-Savart law in vacuo:  
\begin{equation}
  \B(\y)={\bf \BS}(\j)(\y)=\int_S \j(\x)\times \frac{\y-\x}{|\y-\x|^3}dS(\x),
  \label{eq:BS}
\end{equation}
where, to reduce the amount of notation, we dropped the
$\frac{\mu_0}{4 \pi}$ factor in front of the integral.
The notation ${\bf \BS}(\j)$ refers to the Biot-Savart operator,
function of $\j$, that maps each $\y \not \in S$ in the local field, $\B(\y)$.

\begin{remark}
  ${\bf \BS}(\j)$ cannot be defined on $S$, unless $\j=0$.
  This is a consequence of $\frac{1}{|\y-\x|^2}$ not being
  integrable for $\y\in S$.
  It was expected because the interface condition introduces a discontinuity
  of ${\bf \BS} (\j)(\y+\eps \n(\y))$ at $\eps=0$.
    \label{rmk:B_not_integrable}
\end{remark}
However, since ${\bf \BS}(\j)$ is well-defined in locations $\not \in S$,
we can define for any $\y \in S$ and any $\eps>0$ the bilinear map
\begin{equation}
    \Leps(\jone,\jtwo)(y)= \frac{1}{2}
    \{
    \jone(\y)\times \big[ \BS(\jtwo)(\y+\eps \n(\y))+\BS(\jtwo)(\y-\eps \n(\y)) \big]
    \}.
    \label{eq:L_eps}
\end{equation}
This describes the Laplace force that a surface-current of
density $\jtwo$ exerts on another of density $\jone$, per unit surface. 
Since we are dealing with a stellarator, these currents are constant in time and
there is no need to include induced fields and the associated forces.

The `average Laplace force' that a current of density $\j$ exerts 
on point $\y\in S$ is thus $\L(\j)(\y)=\lim_{\eps \to 0} \Leps(\j,\j)(\y)$.

This definition, however, raises several questions:
\begin{enumerate}
    \item \label{Q1}Under which assumptions on $\j$ can we ensure that $\L(\j)$ is well defined (i.e., that the limit is well -defined)?
    \item \label{Q2}Can we find an explicit expression of $\L(\j)$ (i.e., without a limit on $\eps$)?
    \item \label{Q3}Which functional space does $\L(\j)$ belong to (for $\j$ in a given functional space)?
\end{enumerate}

The first point is more theoretical, but is necessary
to answer the second and third one, which have very practical
consequences. Indeed, without an explicit expression for $\L(\j)$, the
numerical computation of the Laplace force may be a complex matter. A
typical approach would involve 3 different scales. From the smallest to the
largest, these are the discretisation-length of $S$, $h$, 
the infinitesimal displacement $\eps$, and the characteristic
distance of variation of the magnetic field, $d_B$.

An accurate computation of $\B(\y+\eps \n(\y))$ requires $S$ to be finely
discretized, with $h \ll \eps$.
This is because $\int_S |\y+\eps \n(\y)-\x|^{-2}dS(\x)$ blows up when
$\eps \to 0$.
Indeed when we replace the integral with a discrete sum and take the limit for 
small $\eps$,
$$\tilde \B\big( \y_{i,j}+\eps \n(\y_{i,j}) \big)=\sum_{k,l}\j(\x_{k,l})\times \frac{\y_{i,j}+\eps \n(\y_{i,j})-\x_{k,l}}{|\y_{i,j}+\eps \n(\y_{i,j})-\x_{k,l}|^3} \overset{\eps \to 0}{\approx} \frac{\j(\y_{i,j})\times \n(\y_{i,j})}{\eps^2}$$
which for small $\eps$ diverges like $\eps^{-2}$,
as shown in Figure \ref{fig:B_norm} for NCSX.
\begin{figure}
    \includegraphics[width=0.6\textwidth]{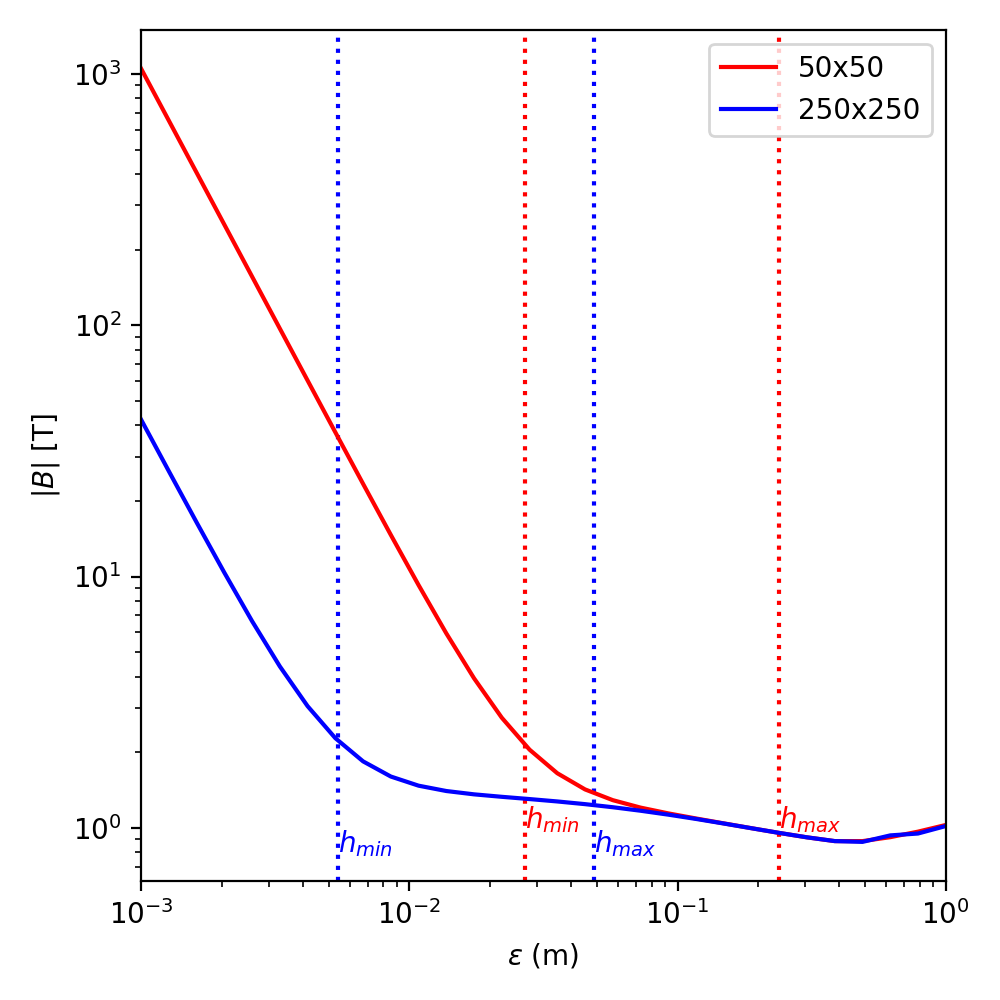}
    \captionof{figure}{Average norm of $\B$ as a function of the distance $\eps$ from the surface $S$, for two different girds, more coarse (red) or fine (blue). The divergence for small $\eps$ is a numerical artifact due to the numerical discretisation when $\eps \lesssim h$.}
    \label{fig:B_norm}
\end{figure}

The semi-sum $\frac{\B(\y+\eps \n(\y))+\B(\y-\eps \n(\y))}{2}$ is numerically
more stable, but we still need $h \lesssim \eps$ (as it will be shown later in
Figure \ref{fig:L_eps}). Such fine mesh makes it costly to accurately compute 
$\L(\j)(\y)$ as $\lim_{\eps \to 0} \Leps(\j,\j)(\y)$.

The functional space of $\L(\j)$ is also important to understand what type of
penalization can be applied to minimize this force, or a related metric.

\subsection{Notations}  \label{subsec:notations}
We start by introducing the following notations:
\begin{itemize}
    \item $S$ is a smooth 2-dimensional Riemannian submanifold of $\R^3$, diffeomorphic to the 2-torus.
    \item $\mathfrak{X}(S)$ is the set of smooth vector fields on $S$.
    \item $\langle X \cdot Y \rangle$ denote the scalar product (in $\R^3$) between the vector fields $X$ and $Y$.
    When both vector are tangent to $S$, we sometime denote $\langle X \cdot Y \rangle_{T_xS}$ the scalar product at $x\in S$ (which coincides with the one in $\R^3$).
    \item $L^p(S)$ and $H^1(S)$ are the Hilbert spaces defined as the completion of $\mathcal{C}^\infty(S)$ for the norms
    \begin{align*}
        |f|_{L^p(S)}=&\big( \int_S f^p dS \big)^{1/p}\\
        |f|^2_{H^1(S)}=&\int_S \big( f^2 +\langle \nabla f \cdot \nabla f \rangle \big)dS.
    \end{align*}
    \item $\mathfrak{X}^{p}(S)$ and $\mathfrak{X}^{1,2}(S)$ are the Hilbert spaces defined as the completion of $\mathfrak{X}(S)$ for the norms
    \begin{align*}
        |\j|_{\mathfrak{X}^p(S)}=&\big| \sqrt{\j_x^2+\j_y^2+\j_z^2} \big|_{L^p(S)}\\
        |\j|_{\mathfrak{X}^{1,2}(S)}=&\sqrt{|\j_x|_{H^1(S)}^2+|\j_y|_{H^1(S)}^2+|\j_z|_{H^1(S)}^2}
    \end{align*}
    where $\j_x,\j_y$ and $\j_z$ are the components of $\j$ in $\R^3$ for an arbitrary orthogonal basis.
  \item The spaces $L^p(S,\R^3)$ and $H^{1,2}(S,\R^3)$ are related to $\mathcal{C}^\infty(S,\R^3)$ in the same way as $\mathfrak{X}^p(S)$ and $\mathfrak{X}^{1,2}(S)$ are related to $\mathfrak{X}(S)$. 
    \item $\pi$ is the projector on the tangent bundle. For any ${\bf Y} \in \mathcal{C}^\infty(S,\R^3)$, we define
    \begin{equation}
        \label{eq:pi_x}
        \forall \x \in S,(\pi({\bf Y}))_\x={\bf Y}_\x-\langle {\bf Y}_\x \cdot \n(\x)\rangle \n(\x)\in T_xS.
    \end{equation}
    Since $\pi({\bf Y})$ is clearly a tangent vector field on $S$, it belongs to $\mathfrak{X}(S)$.
\end{itemize}

\subsection{Theorem: computing the Laplace force exerted by one 
  current-sheet on another}  \label{subsec:Theo}
\begin{theorem}
    \label{th:main}
    Let $\jone,\jtwo \in \mathfrak{X}^{1,2}(S)$. Then $\Leps(\jone,\jtwo)$ has
    an $\eps \to 0$ limit in
    $L^p(S,\R^3)$, for any $1\leq p <\infty$, denoted 
    $\L(\jone,\jtwo)$. Furthermore, $\L$ is a continuous bilinear map
    $\mathfrak{X}^{1,2}(S) \times \mathfrak{X}^{1,2}(S) \to L^p(S,\R^3)$ given by
    \begin{align}
        \label{eq:L1}
        \L(\jone,\jtwo)(\y)=&-\int_S \frac{1}{|\y-\x|} \big[ \dive_\x(\pi_\x \jone(\y)) +\pi_\x \jone(\y) \cdot \nabla_\x \big] \jtwo(\x) d\x \\
    \label{eq:L2}&+\int_{S} \langle \jone(\y)  \cdot \n(\x)\rangle \frac{\langle \y-\x \cdot \n(\x)\rangle}{|\y-\x|^3} \jtwo(\x) d\x \\
    \label{eq:L3}&+ \int_S \frac{1}{|\y-\x|} \big[ \langle \jone(\y) \cdot \jtwo(\x) \rangle \dive_\x(\pi_\x)+  \nabla_\x \langle \jone(\y) \cdot \jtwo(\x) \rangle \big]  d\x  \\
    \label{eq:L4}&- \int_S \langle \jone(\y) \cdot \jtwo(\x) \rangle \frac{\langle \y-\x \cdot \n(\x) \rangle}{|\y-\x|^3}\n(\x) d\x
    \end{align}
\end{theorem}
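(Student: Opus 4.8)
\emph{Proof plan.} The idea is to establish the identity and the bounds first for smooth $\jone,\jtwo\in\mathfrak{X}(S)$, with constants uniform in $\eps$, and then extend to $\mathfrak{X}^{1,2}(S)$ by density. Fix $\eps>0$ and set $\y^{\pm}_\eps=\y\pm\eps\n(\y)$, so that $\y^{\pm}_\eps\notin S$ and the integrand of the Biot--Savart law \eqref{eq:BS} is smooth and bounded on the compact surface $S$. Using the double cross product identity $a\times(b\times c)=b\,\langle a\cdot c\rangle-c\,\langle a\cdot b\rangle$ in \eqref{eq:L_eps} and the elementary relation $\tfrac{\y^{\pm}_\eps-\x}{|\y^{\pm}_\eps-\x|^{3}}=\nabla_\x\tfrac{1}{|\y^{\pm}_\eps-\x|}$, one rewrites $\Leps(\jone,\jtwo)(\y)$ as a finite sum of integrals over $S$ whose kernels are first-order $\x$-derivatives of the smooth function $\x\mapsto\tfrac1{|\y^{\pm}_\eps-\x|}$, contracted with the fixed vector $\jone(\y)$ and with $\jtwo(\x)$. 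Splitting each such $\x$-derivative into its $S$-tangential part $\nabla^{S}_\x$ and its normal part $\langle\,\cdot\,,\n(\x)\rangle\,\n(\x)$, the normal parts generate kernels proportional to $\tfrac{\langle\y^{\pm}_\eps-\x\,\cdot\,\n(\x)\rangle}{|\y^{\pm}_\eps-\x|^{3}}$, while the tangential parts are integrated by parts on $S$ --- which is closed, so there is no boundary term --- moving $\nabla^{S}_\x$ off the singular factor and onto $\jtwo$; the Leibniz rule $\dive_\x(f\,\pi_\x v)=\langle\nabla^{S}_\x f\cdot\pi_\x v\rangle+f\,\dive_\x(\pi_\x v)$ then produces precisely the operators $\dive_\x(\pi_\x\jone(\y))$, $\pi_\x\jone(\y)\cdot\nabla_\x$, $\dive_\x(\pi_\x)$ and $\nabla_\x\langle\jone(\y)\cdot\jtwo(\x)\rangle$ appearing in \eqref{eq:L1}--\eqref{eq:L4}, with $\y^{\pm}_\eps$ in place of $\y$. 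The outcome is an exact ``regularised'' formula for $\Leps(\jone,\jtwo)$ in which every kernel is either $\tfrac1{|\y^{\pm}_\eps-\x|}$ or $\tfrac{\langle\y^{\pm}_\eps-\x\,\cdot\,\n(\x)\rangle}{|\y^{\pm}_\eps-\x|^{3}}$, multiplied by factors built from $\jone(\y)$, from $\jtwo$ and $\nabla\jtwo$, from $\n$, and from the (smooth, bounded) curvature of $S$.

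It then remains to let $\eps\to0$ under these integrals, and the core of the argument is a kernel estimate. Writing $K^{\pm,0}_\eps(\y,\x)=\tfrac1{|\y^{\pm}_\eps-\x|}$ and $K^{\pm,1}_\eps(\y,\x)=\tfrac{\langle\y^{\pm}_\eps-\x\,\cdot\,\n(\x)\rangle}{|\y^{\pm}_\eps-\x|^{3}}$, I claim that for every $q\in[1,2)$ the symmetrised kernels $\tfrac12(K^{+,i}_\eps+K^{-,i}_\eps)$, $i=0,1$, are bounded in $L^{q}_\x(S)$ uniformly in $\y\in S$ and in $\eps>0$, and converge, in $L^{q}_\x(S)$ uniformly in $\y$, to $\tfrac1{|\y-\x|}$ and $\tfrac{\langle\y-\x\,\cdot\,\n(\x)\rangle}{|\y-\x|^{3}}$ respectively as $\eps\to0$. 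For $i=0$ this is immediate: $\tfrac1{|\y-\x|}\in L^{q}_\x(S)$ uniformly in $\y$ since $S$ is a smooth compact $2$-manifold, and each $K^{\pm,0}_\eps$ is dominated by a fixed multiple of it. For $i=1$ the single-sided kernels do \emph{not} stay bounded in $L^{q}_\x(S)$ when $q>1$ --- their mass concentrates on the $\eps$-neighbourhood of $\x=\y$ --- and boundedness is recovered only \emph{after} symmetrisation: the dominant $\pm\eps\langle\n(\y)\cdot\n(\x)\rangle$ part of the numerators cancels in the half-sum, and, using the smoothness of $S$ (so that $\langle\y-\x\cdot\n(\x)\rangle=O(|\y-\x|^{2})$ and $\langle\n(\y)\cdot(\y-\x)\rangle=O(|\y-\x|^{2})$, together with the Taylor expansion $|\y^{\pm}_\eps-\x|^{2}=|\y-\x|^{2}+\eps^{2}\pm2\eps\langle\n(\y)\cdot(\y-\x)\rangle$), one bounds the half-sum by $C\big(\tfrac1{|\y-\x|}+o_\eps(1)\,\tfrac1{|\y-\x|}\big)$ near the diagonal. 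I expect this cancellation-plus-near-diagonal estimate to be the main obstacle: it needs a careful local graph parametrisation of $S$ around $\y$, uniform control of the second fundamental form, and the explicit expansion of the two denominators just written.

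Granting the kernel estimate, the passage to the limit is routine. Each term of the regularised formula reads $\y\mapsto\int_S K_\eps(\y,\x)\,g(\x)\,d\x$ (possibly dotted with $\jone(\y)$ or $\n$), with $g$ equal to $\jtwo$, to $\nabla\jtwo$, or to $\jtwo$ times a bounded curvature factor; hence $g\in L^{2}(S,\R^{3})$ with $|g|_{L^{2}}\lesssim|\jtwo|_{\mathfrak{X}^{1,2}(S)}$. By the uniform $L^{q}_\x$-bound on $K_\eps$ and Young's inequality for integral operators on $S$, these operators map $L^{2}$ into $L^{p}(S)$ with $\tfrac1p=\tfrac1q-\tfrac12$, hence, letting $q\uparrow2$, into $L^{p}(S)$ for every $1\le p<\infty$; the $L^{q}_\x$-convergence of the kernels then gives convergence of each term, in $L^{p}(S,\R^{3})$, to the corresponding term of \eqref{eq:L1}--\eqref{eq:L4}, together with the estimate $|\Leps(\jone,\jtwo)|_{L^{p}}+|\L(\jone,\jtwo)|_{L^{p}}\le C_{p}\,|\jone|_{\mathfrak{X}^{p_1}(S)}\,|\jtwo|_{\mathfrak{X}^{1,2}(S)}$ with $p_1$ as large as needed, uniformly in $\eps$; the Sobolev embedding $H^{1}(S)\hookrightarrow L^{p_1}(S)$ on the $2$-manifold $S$ upgrades this to $\le C_{p}\,|\jone|_{\mathfrak{X}^{1,2}(S)}\,|\jtwo|_{\mathfrak{X}^{1,2}(S)}$. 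Finally, for general $\jone,\jtwo\in\mathfrak{X}^{1,2}(S)$ one approximates by smooth fields and uses bilinearity with these uniform bounds in a standard three-$\eps$ argument: the smooth-data limits form a Cauchy net in $L^{p}$, their limit defines $\L(\jone,\jtwo)$, one gets $\Leps(\jone,\jtwo)\to\L(\jone,\jtwo)$ in $L^{p}$, and each of \eqref{eq:L1}--\eqref{eq:L4}, being a continuous bilinear map $\mathfrak{X}^{1,2}(S)\times\mathfrak{X}^{1,2}(S)\to L^{p}(S,\R^{3})$, passes to the limit, which simultaneously proves the formula and the asserted continuity of $\L$.
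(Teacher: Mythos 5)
Your plan is correct and follows essentially the same route as the paper's proof: the double cross product, the rewriting of the Biot--Savart kernel as $\nabla_\x|\y^\pm_\eps-\x|^{-1}$, the tangential/normal splitting with integration by parts on the closed surface, the domination of the $\langle\y-\x\cdot\n(\x)\rangle$ kernel by $C/|\y-\x|$ via smoothness of $S$, the cancellation of the $\pm\eps\langle\n(\y)\cdot\n(\x)\rangle$ contributions under symmetrisation, and a Hardy--Littlewood--Sobolev/Young estimate to land in $L^p$ are exactly the paper's Lemmas \ref{lem:reg1}, \ref{lem:min}, \ref{lem:reg2} and \ref{HLS}. The only caveat is that your claimed near-diagonal bound $o_\eps(1)/|\y-\x|$ for the symmetrised $\eps$-term is not literally attainable (at $|\y-\x|\sim\eps$ it is of order $1/|\y-\x|$ with no small factor); the correct statement, as in Lemma \ref{lem:reg2}, trades a vanishing power of $\eps$ against a slightly more singular but still integrable kernel, $\eps^{1-\al}|\x-\y|^{-(5/2-\al)}$ with $\al\in(1/2,1)$, which still yields the uniform-in-$\y$ $L^q_\x$ convergence you need.
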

\begin{remark}
    \label{rmk:abuse_notation}
    \begin{itemize}
    \item The notation $V \cdot \nabla_\x F$
      (where $V\in \mathfrak{X}(S)$ is a 2D vector
      and $F\in \mathcal{C}^\infty(S,\R^3)$ a 3D one) stands for
      $\sum_{\al=1}^2 \sum_{i=1}^3 V^\al \partial_\al F^i \e_i$. 
          Here $\al$ is the index for the surface coordinates
          ($\theta$ and $\varphi$, for example), whereas $(\e_1,\e_2,\e_3)$ is
          a basis of $\R^3$.
            \item $\dive_\x(\pi_\x)$ stands for the 3D vector $\sum_{i=1}^3 \dive_\x(\pi_\x \e_i)\e_i$
    \end{itemize}
\end{remark}

The proof of the theorem is somewhat long and will be organized as follows: 
in Sec.~\ref{subsec:gen} we will rewrite Eq.~\ref{eq:L_eps} in terms
of surface integrals \ref{eq:surfint1}, \ref{eq:surfint2},
\ref{eq:surfint3} and \ref{eq:surfint4}. 
Integrals \ref{eq:surfint1} and \ref{eq:surfint3}, with integrands tangential
to $S$ (``tangential terms'') will be dealt with in Sec.~\ref{subsec:tang1} and
\ref{subsec:tang2}. 
Integrals \ref{eq:surfint2} and \ref{eq:surfint4}, with integrands normal
to $S$ (``normal terms'') will be treated in Sec.~\ref{subsec:norm1} and
\ref{subsec:norm2}.

\subsubsection{Proof: beginning and general ideas} \label{subsec:gen}
Consider two linear densities of surface-currents
$\jone,\jtwo \in \mathfrak{X}^{1,2}(S)$ and fix $\epsilon>0$.

Thanks to the well-known formula
$\bf A\times (B\times C)= (A\cdot C)B- (A \cdot B) C$, we obtain from
Eqs.\ref{eq:BS} and \ref{eq:L_eps} that
\begin{align}
    \label{eq:Leps1}
    \Leps(\jone,\jtwo)(y)=&\int_S \langle \jone(\y) \cdot (\frac{\y-\x+\eps \n(\y)}{2|\y-\x+\eps \n(\y)|^3}+\frac{\y-\x-\eps \n(\y)}{2|\y-\x-\eps \n(\y)|^3})\rangle \jtwo(\x) d\x \\
    \label{eq:Leps2}
    &-\int_S \langle \jone(\y) \cdot \jtwo(\x) \rangle (\frac{\y-\x+\eps \n(\y)}{2|\y-\x+\eps \n(\y)|^3}+\frac{\y-\x-\eps \n(\y)}{2|\y-\x-\eps \n(\y)|^3}) d\x.
\end{align}
The difficulty is that $\frac{1}{|x|^2}$ is not integrable in 2 dimensions (Remark \ref{rmk:B_not_integrable}).
Hence, it does not make sense to take the limit for $\eps \to 0$ 
directly inside the integral.
Nevertheless, we can use the following equality:
\begin{align}
    \label{eq:T1}
     \int_{S} \langle \jone(\y) \cdot \frac{\y-\x+\eps \n(\y)}{|\y-\x+\eps \n(\y)|^3}\rangle \jtwo(\x) d\x
    =\int_{S} \langle \jone(\y) \cdot \nabla_\x \frac{1}{|\y-\x+\eps \n(\y)|}\rangle \jtwo(\x) d\x
\end{align}
where $\nabla_\x$ is the gradient in $\R^3$ with respect to the variable $\x$.
We would like to integrate by part to take advantage of the integrability of $\frac{1}{|x|}$.
For this we decompose $\nabla_\x$ into the tangential part of the gradient, $\nabla_S$, and normal component, $\nabla_{\perp}$.
As a consequence, for Eq.~\ref{eq:Leps1} we have the following equalities:
\begin{align}
    &\int_{S} \langle \jone(\y) \cdot \frac{\y-\x\pm \eps \n(\y)}{|\y-\x\pm \eps \n(\y)|^3}\rangle \jtwo(\x) d\x\\
    =&\int_{S} \langle \jone(\y) \cdot \nabla_\x \frac{1}{|\y-\x\pm \eps \n(\y)|}\rangle \jtwo(\x) d\x\\
    =& \int_{S} \langle \jone(\y) \cdot \nabla_S \frac{1}{|\y-\x\pm \eps \n(\y)|}\rangle \jtwo(\x) d\x
\label{eq:surfint1}
    \\
    &+ \int_S \langle \jone(\y) \cdot \frac{\langle \y-\x, \n(\x) \rangle \pm \eps \langle \n(\y), \n(\x) \rangle}{|\y-\x\pm \eps \n(\y)|^3}\n(\x)\rangle \jtwo(\x) d\x
    \label{eq:surfint2}
\end{align}
and for expression \ref{eq:Leps2}
\begin{align}
    &\int_S \langle \jone(\y) \cdot \jtwo(\x) \rangle \frac{\y-\x\pm \eps \n(\y)}{|\y-\x\pm \eps \n(\y)|^3} d\x \\
    =&\int_S \langle \jone(\y) \cdot \jtwo(\x) \rangle \nabla_\x \frac{1}{|\y-\x\pm \eps \n(\y)|} d\x\\
    =& \int_S \langle \jone(\y) \cdot \jtwo(\x) \rangle \nabla_S \frac{1}{|\y-\x\pm \eps \n(\y)|} d\x
        \label{eq:surfint3}
    \\
    &+ \int_S \langle \jone(\y) \cdot \jtwo(\x) \rangle \frac{\langle \y-\x, \n(\x) \rangle \pm \eps \langle \n(\y), \n(\x) \rangle}{|\y-\x\pm \eps \n(\y)|^3} \n(\x) d\x
    \label{eq:surfint4}
\end{align}

\subsubsection{Proof: First tangential term} \label{subsec:tang1}

Integration by parts on a compact manifold $\mathcal{M}$ without boundary is given by the following formula. 
Let $f\in \mathcal{C}^\infty(\mathcal{M})$ and ${\bf X}$ a smooth vector field on $\mathcal{M}$, then
\begin{equation}
   \int_{\mathcal{M}} \dive(f{\bf X})= 0 =
   {\bf X}f + \int_{\mathcal{M}} f \dive {\bf X} 
   \label{eq:intparts}
\end{equation}
We also recall that ${\bf X}f=\langle {\bf X} \cdot \nabla f\rangle$ in Euclidean coordinates.

Let us start with the first tangential term (Eq.~\ref{eq:surfint1}):
\begin{align*}
    &\int_{S} \langle \jone(\y) \cdot \nabla_{S} \frac{1}{|\y-\x \pm\eps \n(\y)|}\rangle_{\R^3} \jtwo(\x) d\x \\
  =& \int_{S} \langle \pi_\x \jone(\y)  \cdot \nabla_{S} \frac{1}{|\y-\x \pm\eps \n(\y)|}\rangle_{T_xS} \jtwo(\x) d\x \
\end{align*}
as $\jone(\y)-\pi_\x \jone(\y) \propto \n(\x)$. 

Then, let $j_2^i(\x)$ be the
$i$-th component in $\R^3$ of $\jtwo$.
Using integration by parts (Eq.~\ref{eq:intparts}), 
the $i$-th component of the last integral writes 
\begin{align}
  &\int_{S} \langle j^i_2(\x) \pi_\x \jone(\y)  \cdot \nabla_{S} \frac{1}{|\y-\x \pm\eps \n(\y)|}\rangle_{T_xS} d\x
  \label{eq:dem1}
    \\
    = &
    -\int_{S} \frac{1}{|\y-\x \pm\eps \n(\y)|} \dive_\x (j^i_2(\x) \pi_\x \jone(\y))d\x
    \label{eq:dem2}
    \\
=&-\int_S \frac{1}{|\y-\x \pm\eps \n(\y)|} \big[ j^i_2(\x) \dive_\x (\pi_\x \jone(\y)) + \langle \pi_\x \jone(\y) \cdot \nabla j^i_2(\x) \rangle \big] d\x
    \label{eq:dem3}
\end{align}
Thus the term in equation $\ref{eq:T1}$ is equal to:
$$- \sum_{i=1}^3 \big(\int_S \frac{1}{|\y-\x \pm\eps \n(\y)|} \big[ j^i_2(\x) \dive_\x (\pi_\x \jone(\y)) + \langle \pi_\x \jone(\y) \cdot \nabla j^i_2(\x) \rangle \big] d\x \big) \e_i$$
that, with the conventions introduced in Remark \ref{rmk:abuse_notation},
can be rewritten as:
$$-\int_S \frac{1}{|\y-\x \pm\eps \n(\y)|} \big[ \dive_\x (\pi_\x \jone(\y)) +\pi_\x \jone(\y) \cdot \nabla_\x \big]  \jtwo(\x) d\x$$
Now, we will prove that it is possible to take the limit $\eps \to 0$ inside this integral.
The first step is to use the following estimate.
\begin{lemma}
    $|\dive_\x  \pi_\x \jone(\y)| \leq C(S) |\jone(\y)|$ with $C(S)$ a constant that only depends on the metric of $S$.
\end{lemma}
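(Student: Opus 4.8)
The plan is to exploit that, for the purpose of this lemma, the point $\y$ is frozen, so $\jone(\y)$ is merely a fixed vector of $\R^3$; consequently $\x \mapsto \pi_\x \jone(\y) = \jone(\y) - \langle \jone(\y) \cdot \n(\x)\rangle\,\n(\x)$ is an honest element of $\mathfrak{X}(S)$ — it is smooth and tangent because $\n$ is smooth on $S$ — and the map $v \mapsto \dive_\x(\pi_\x v)$ from $\R^3$ to $\mathcal{C}^\infty(S)$ is linear. The lemma then says exactly that this linear map is bounded, with operator norm controlled by the geometry of $S$.

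A first, cheap way to conclude is to expand $\jone(\y) = \sum_{i=1}^3 j_1^i(\y)\,\e_i$ in the fixed basis of $\R^3$, so that $\dive_\x(\pi_\x \jone(\y)) = \sum_{i=1}^3 j_1^i(\y)\,\dive_\x(\pi_\x \e_i)$. Each function $\x \mapsto \dive_\x(\pi_\x \e_i)$ is smooth on the compact manifold $S$, hence bounded; setting $M_i := |\dive_\x(\pi_\x \e_i)|_{L^\infty(S)}$ and using Cauchy--Schwarz in $\R^3$ gives, pointwise in $\x$, $|\dive_\x(\pi_\x \jone(\y))| \leq \big(\sum_i M_i^2\big)^{1/2}\,|\jone(\y)|$, which is the claim with $C(S) = \big(\sum_i M_i^2\big)^{1/2}$ independent of $\y$.

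I would, however, prefer to record the sharper identity, which makes the dependence on $S$ fully explicit and will also clarify the meaning of the term $\dive_\x(\pi_\x)$ appearing in Eq.~\ref{eq:L3}. Since a constant ambient vector field $v$ has vanishing surface divergence along $S$, splitting $v = \pi_\x v + \langle v \cdot \n(\x)\rangle\,\n(\x)$ and using that $\dive_\x(\langle v \cdot \n\rangle\,\n) = \langle v \cdot \n\rangle\,\dive_\x \n$ (the tangential gradient of $\langle v \cdot \n\rangle$ pairing to zero with $\n$) together with $\dive_\x \n = -(\kappa_1+\kappa_2)(\x)$, one obtains $\dive_\x(\pi_\x v) = (\kappa_1+\kappa_2)(\x)\,\langle v \cdot \n(\x)\rangle$, where $\kappa_1,\kappa_2$ are the principal curvatures of $S$. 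Hence $|\dive_\x(\pi_\x \jone(\y))| \leq |\langle \jone(\y) \cdot \n(\x)\rangle|\,|\kappa_1+\kappa_2|(\x) \leq |\kappa_1+\kappa_2|_{L^\infty(S)}\,|\jone(\y)|$, so one may take $C(S) = |\kappa_1+\kappa_2|_{L^\infty(S)}$, which is finite because $S$ is a smooth compact surface and which is a fixed datum of the geometry of $S$.

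There is essentially no analytic obstacle here: either route reduces the estimate to the elementary fact that a smooth function on a compact manifold is bounded, together with linearity of $\pi_\x$ in the vector argument. The only points needing a little care are bookkeeping: checking that the divergence in the statement is the intrinsic surface divergence of the tangent field $\x \mapsto \pi_\x\jone(\y)$, that this field is genuinely smooth and tangent (which follows from smoothness of $\n$), fixing the normalisation convention in $\dive_\x \n = -(\kappa_1+\kappa_2)$, and noting that the constant is uniform in $\y$ — automatic, since it involves only $S$.
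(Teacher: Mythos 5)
Your proposal is correct, and its first route is exactly the paper's argument: the paper's entire proof is the one-line observation that $\mathbf{v} \mapsto \big(\x \mapsto \dive_\x(\pi_\x \mathbf{v})\big)$ is a continuous linear map from $\R^3$ to $\mathcal{C}^\infty(S)$; your expansion in the basis $(\e_i)$, boundedness of each smooth function $\dive_\x(\pi_\x\e_i)$ on the compact surface, and Cauchy--Schwarz is just that assertion made explicit. The second route goes beyond the paper: the identity $\dive_\x(\pi_\x v) = \pm(\kappa_1+\kappa_2)(\x)\,\langle v\cdot\n(\x)\rangle$, obtained from the vanishing of the surface divergence of a constant ambient field, is correct (modulo the sign convention you already flag) and yields the explicit constant $C(S)=|\kappa_1+\kappa_2|_{L^\infty(S)}$, whereas the paper is content with an unspecified constant. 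That refinement costs nothing and is genuinely informative, since it also identifies the quantity $\dive_\x(\pi_\x)$ of Remark \ref{rmk:abuse_notation} appearing in Eq.~\ref{eq:L3} as a curvature term, namely $(\kappa_1+\kappa_2)(\x)\,\n(\x)$ up to sign; the only care needed, as you note, is that the divergence is the intrinsic one on $S$, consistent with the integration-by-parts formula \ref{eq:intparts} used in the surrounding argument.
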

\begin{proof}
    Indeed, the application \begin{align*}
        \dive_\x \pi_\x \colon \R^3 & \to  \mathcal{C}^{\infty}(S)\\
        \bf{v} &\mapsto \big( \x \mapsto \dive_\x (\pi_\x \bf{v}) \big).
      \end{align*} is a continuous linear application.
\end{proof}
We also need a Young-type inequality for 2-dimensional compact manifolds.
\begin{lemma}
    \label{HLS}
    For all $1\leq q< \infty$, there exists $C>0$ such that for all $f$ in $L^2(S)$, $|\int_S \frac{1}{|\y-\x|} f(\x) d\x|_{L^q_y} \leq C_q |f|_{L^2}$.
\end{lemma}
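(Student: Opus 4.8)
The plan is to recognize the map
\[
Tf(\y)\;=\;\int_S \frac{1}{|\y-\x|}\,f(\x)\,d\x
\]
as an integral operator whose kernel $K(\y,\x)=|\y-\x|^{-1}$ lies, uniformly in one variable, in $L^r(S)$ for every $r<2$, and then to invoke the classical generalized Young (Schur-type) inequality for integral operators. The first step is therefore to establish the uniform bound
\[
M_r\;:=\;\sup_{\y\in S}\int_S \frac{1}{|\y-\x|^{\,r}}\,d\x\;<\;\infty\qquad\text{for every }1\le r<2 ,
\]
which, by the symmetry $K(\y,\x)=K(\x,\y)$, simultaneously controls $\sup_{\x\in S}\int_S|\y-\x|^{-r}\,d\y$.

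To prove $M_r<\infty$ I would combine the layer-cake representation with an Ahlfors-type upper volume bound for the compact smooth surface $S$: there is $C_S>0$ such that $|S\cap B_\rho(\y)|\le C_S\,\rho^2$ for all $\y\in S$ and all $\rho>0$, where for small $\rho$ this follows by covering $S$ with finitely many charts in which $S$ is a smooth graph over its tangent plane, and for large $\rho$ it follows from $|S\cap B_\rho(\y)|\le|S|$ after adjusting the constant. Writing
\[
\int_S \frac{1}{|\y-\x|^{\,r}}\,d\x \;=\; r\int_0^\infty |S\cap B_\rho(\y)|\,\rho^{-r-1}\,d\rho
\]
and splitting the $\rho$-integral at a fixed radius $\rho_0$, the near-diagonal part is dominated by $C_S\,r\int_0^{\rho_0}\rho^{1-r}\,d\rho$, which is finite precisely because $r<2$, while the far part is bounded by $|S|\,r\int_{\rho_0}^{\operatorname{diam}S}\rho^{-r-1}\,d\rho<\infty$. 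This gives $M_r<\infty$ with an explicit dependence on $r$ (and $M_r\to\infty$ as $r\to2^-$).

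With the uniform kernel bound in hand I would apply the generalized Young inequality: if $\sup_\y\|K(\y,\cdot)\|_{L^r(S)}$ and $\sup_\x\|K(\cdot,\x)\|_{L^r(S)}$ are finite and $\tfrac1q=\tfrac1p+\tfrac1r-1\ge0$, then $T\colon L^p(S)\to L^q(S)$ is bounded with operator norm at most $M_r^{1/r}$. Taking $p=2$ and $r=r(q):=\tfrac{2q}{q+2}$, which lies in $[1,2)$ exactly when $q\in[2,\infty)$, yields $|Tf|_{L^q}\le C_q|f|_{L^2}$ for every $q\in[2,\infty)$ with $C_q=M_{r(q)}^{1/r(q)}$; the remaining range $1\le q<2$ follows at once from $L^2(S)\hookrightarrow L^q(S)$ since $S$ has finite area. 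Strictly, the estimate is first proved for $f\in\mathcal{C}^\infty(S)$, where the integral defining $Tf$ is absolutely convergent, and then extended to all of $L^2(S)$ by density, $L^2(S)$ being the completion of $\mathcal{C}^\infty(S)$.

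The only genuinely geometric ingredient — and the step I would treat most carefully — is the uniform volume bound $|S\cap B_\rho(\y)|\le C_S\rho^2$: it is precisely here that compactness and smoothness of the embedded surface are used, in particular to exclude the possibility that points far apart along $S$ yet close in $\R^3$ contribute an unbounded amount to $M_r$. Everything past that point is soft functional analysis. Note also that $C_q\to\infty$ as $q\to\infty$, in agreement with the genuine failure of the endpoint case $q=\infty$; this is why the constant in the statement must be allowed to depend on $q$.
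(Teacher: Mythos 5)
Your argument is correct, and it is genuinely more self-contained than what the paper does: the paper's proof of this lemma is essentially a citation of the Hardy--Littlewood--Sobolev inequality on compact Riemannian manifolds (with a remark that one could instead argue ``as in the proof of the classical Young inequality''), followed by the observation that the Riemannian and Euclidean distances on $S$ are equivalent. You have, in effect, carried out the alternative the paper only alludes to, and you do it directly for the Euclidean kernel $|\y-\x|^{-1}$, so you never need the distance-equivalence step or any manifold version of HLS. The two ingredients you isolate are exactly the right ones: the uniform Ahlfors-type bound $|S\cap B_\rho(\y)|\le C_S\rho^2$, which is where compactness and smoothness of the embedded surface enter and which correctly rules out contributions from sheets of $S$ that are close in $\R^3$ but far along $S$; and the generalized Young (Schur-type) inequality with $r(q)=2q/(q+2)\in[1,2)$, which gives the range $q\in[2,\infty)$, the remaining $1\le q<2$ following from the finiteness of $|S|$. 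The only blemish is bookkeeping: in the layer-cake computation the far-field piece should either run over $\rho\in(\rho_0,\infty)$ (the integrand $\rho^{-r-1}$ is integrable there) or, if you truncate at $\operatorname{diam}S$, you must add back the boundary term $|S|(\operatorname{diam}S)^{-r}$ coming from the levels $t<(\operatorname{diam}S)^{-r}$ where the superlevel set is all of $S$; either way $M_r<\infty$ holds and nothing downstream is affected. What your route buys is an elementary, fully explicit proof with a transparent constant $C_q=M_{r(q)}^{1/r(q)}$ blowing up as $q\to\infty$, consistent with the failure of the $L^\infty$ endpoint noted in Remark~\ref{rmk:notLinfty}; what the paper's route buys is brevity and immediate access to sharper HLS-type statements if they were ever needed.
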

\begin{proof}
    Let $d_g$ denote the Riemannian distance on $S$. By a Hardy-Littlewood-Sobolev inequality $\int_S \frac{1}{d_g(\y,\x)}f(x) d\x$ is in $L^q(S)$ for all $1\leq q < \infty$.
    This result can be found, for example, in \cite{han_hardylittlewoodsobolev_2016} or can be proved directly with the arguments of the proof of the classical Young inequality.
    As the Euclidean distance and the Riemannian distance are equivalent, the lemma is proved.
\end{proof}
Thus, for all $1\leq q< \infty$, $\int_S \frac{1}{|\y-\x|}  \partial_i \jtwo(\x) d\x \in L^q(S,\R^3)$.
Besides, by Sobolev embedding \cite{hebey_nonlinear_2000}, there is a continuous injection $\mathfrak{X}^{1,2}(S)\xhookrightarrow{} \mathfrak{X}^p(S)$ for all $1\leq p<\infty$.
As a consequence $j_1^i(\y) \int_S \frac{1}{|\y-\x|}\partial_i \jtwo(\x) d\x \in \mathfrak{X}^p(S)$ for $1\leq p<\infty$.

With these estimates we can conclude, using dominated convergence, that:
\begin{align}
  &\int_{S} \langle \jone(\y) \cdot \nabla_{S} \frac{1}{|\y-\x \pm\eps \n(\y)|}\rangle \jtwo(\x) d\x
  \label{eq:dem4}
  \\
  \xrightarrow{\mathfrak{X}^p(S)} & -\int_S \frac{1}{|\y-\x|} \dive_\x (\pi_\x \jone(\y)) \jtwo(\x) d\x
  \label{eq:dem5}
  \\
  &-\int_S \frac{1}{|\y-\x|} (\pi_\x \jone(\y) \cdot \nabla_\x) \jtwo(\x) d\x
  \label{eq:dem6}
\end{align}
Note that the two integrals (respectively
with the sign $+$ or $-$ at denominator) converge to the same limit. 
Their sum yields the integral on the
right-hand-side of Eq.~\ref{eq:L1}.

\subsubsection{Proof: Second tangential term} \label{subsec:tang2}

Now, let us tackle the  term
in Eq.~\ref{eq:surfint3}. 
We start by computing the $i$-th component of that integral, i.e.~its projection
on $\e_i$, then follow a derivation similar to Eqs.~\ref{eq:dem1}-\ref{eq:dem3}: 
\begin{align*}
    &\int_S \langle \jone(\y) \cdot \jtwo(\x) \rangle  \langle \e_i \cdot \nabla_S \frac{1}{|\y-\x \pm\eps \n(\y)|} \rangle d\x \\
    =&\int_S \langle \jone(\y) \cdot \jtwo(\x) \langle  \pi_\x \e_i \cdot \nabla_S \frac{1}{|\y-\x \pm\eps \n(\y)|}\rangle d\x\\
    =&-\int_S \frac{1}{|\y-\x \pm\eps \n(\y)|} \dive_\x (\langle \jone(\y) \cdot \jtwo(\x) \rangle \pi_\x \e_i)  d\x\\
    =&-\int_S \frac{1}{|\y-\x \pm\eps \n(\y)|} \big[ \langle \jone(\y) \cdot \jtwo(\x) \rangle \dive_\x (\pi_\x \e_i)+ \langle \pi_\x \e_i \cdot \nabla_\x \langle \jone(\y) \cdot \jtwo(\x) \rangle \rangle \big]  d\x.
\end{align*}
Using the notation of Remark \ref{rmk:abuse_notation}, we
find the vector form of the integral:  
$$-\int_S \frac{1}{|\y-\x \pm\eps \n(\y)|} \left(\langle \jone(\y) \cdot \jtwo(\x) \rangle \dive_\x (\pi_\x)+  \nabla_\x \langle \jone(\y) \cdot \jtwo(\x) \rangle \right)  d\x.$$
Due to the same arguments invoked in Eqs.~\ref{eq:dem4}-\ref{eq:dem6}, 
both integrals, with the sign $+$ and $-$ at denominator, 
converge in $\mathfrak{X}^p(S)$ to the same limit, 
$$-\int_S \frac{1}{|\y-\x|} \big[ \langle \jone(\y) \cdot \jtwo(\x) \rangle \dive_\x (\pi_\x)+  \nabla_\x \langle \jone(\y) \cdot \jtwo(\x) \rangle \big]  d\x.$$
Their sum yields integral \ref{eq:L3}.

\subsubsection{Proof: First normal term} \label{subsec:norm1}

Let us now focus on the normal component of \ref{eq:Leps1},
namely Eq.~\ref{eq:surfint2}. This is in effect the sum of two integrals, which
we will discuss separately:
\begin{align}
  &\int_{S} \langle \jone(\y)  \cdot \n(\x)\rangle
  \frac{\langle \y-\x,\n(\x)\rangle}{|\y-\x \pm\eps \n(\y)|^3} \jtwo(\x) d\x
  \\
  &\int_{S} \langle \jone(\y)  \cdot \n(\x)\rangle
  \frac{\pm\eps \langle \n(\y), \n(\x) \rangle }{|\y-\x \pm\eps \n(\y)|^3} \jtwo(\x) d\x 
\end{align}

First notice that we have the following estimate:
\begin{lemma}
    \label{lem:reg1} $\exists C >0, \forall \x \not =\y\in S, \frac{|\langle \y-\x,\n(\x)\rangle|}{|\y-\x|^2}\leq C$.
\end{lemma}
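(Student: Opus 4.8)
The plan is to deduce the bound from a \emph{uniform two-sided ball condition} on $S$. Since $S$ is a smooth compact embedded surface in $\R^3$, it has positive reach: there exists $r>0$ such that for every $\x\in S$ the two open balls $B^{\pm}_{\x}$ of radius $r$ centered at $\x\pm r\,\n(\x)$ have interiors disjoint from $S$. This is the statement that $S$ satisfies a uniform interior and exterior ball condition; it is a standard consequence of compactness together with the $C^2$ (in fact $C^{1,1}$) regularity of $S$, and the same $r$ works at every point of $S$ by compactness. I would either cite this or prove it by the direct argument sketched in the last paragraph.

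Granting this, the estimate is elementary. Fix $\x\neq\y$ in $S$. Because $\y\notin B^{+}_{\x}$ we have $|\y-(\x+r\,\n(\x))|\ge r$; expanding the square and using $|\n(\x)|=1$ gives $|\y-\x|^{2}-2r\,\langle\y-\x,\n(\x)\rangle\ge 0$, hence $\langle\y-\x,\n(\x)\rangle\le \tfrac{1}{2r}\,|\y-\x|^{2}$. Applying the same reasoning to $B^{-}_{\x}$ yields $\langle\y-\x,\n(\x)\rangle\ge -\tfrac{1}{2r}\,|\y-\x|^{2}$. Combining the two inequalities, $\dfrac{|\langle\y-\x,\n(\x)\rangle|}{|\y-\x|^{2}}\le \dfrac{1}{2r}=:C$, which is the claim.

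If one prefers not to invoke positive reach as a black box, here is the direct route. Cover $S$ by finitely many charts in which $S$ is, after an isometry of $\R^3$, the graph of a $C^2$ function. Using a Lebesgue number of this cover one produces $\rho_0>0$ and $M>0$ such that whenever $|\y-\x|<\rho_0$ one may choose Euclidean coordinates centered at $\x$ with third axis $\n(\x)$ in which $\y=(u,\phi(u))$ with $\phi(0)=0$, $\nabla\phi(0)=0$, and $\|\phi\|_{C^2}\le M$; then $\langle\y-\x,\n(\x)\rangle=\phi(u)$ and $|\y-\x|^2\ge|u|^2$, so Taylor's theorem gives $|\phi(u)|\le \tfrac{M}{2}|u|^2$ and the ratio is $\le M/2$. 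For $|\y-\x|\ge\rho_0$ the ratio is at most $\mathrm{diam}(S)/\rho_0$, since the numerator is bounded by $|\y-\x|\le \mathrm{diam}(S)$. Taking the larger of the two constants proves the lemma.

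The only genuine content is the uniformity of the constant over the compact manifold $S$ — equivalently, the existence of a single radius $r$ (resp. $\rho_0$, $M$) valid at every point — and this is the step I would be most careful to write out; once it is in hand the rest is a one-line computation. This is exactly where compactness of $S$ and its $C^2$ regularity are used.
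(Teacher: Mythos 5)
Your proof is correct, but it takes a genuinely different route from the paper. The paper argues by contradiction: it assumes sequences $\x_n\neq\y_n$ along which the ratio blows up, uses compactness to force both sequences to a common limit $\x_0$, and then observes that the smooth function $\Gamma(\x,\y)=\langle \y-\x,\n(\x)\rangle$ has vanishing first-order partial differentials at every diagonal point $(\x_0,\x_0)$ (the $T_\x S$-derivative because the two contributions cancel against $\langle\h,\n(\x)\rangle=0$, the $T_\y S$-derivative because $\h\perp\n(\x)$), so that a second-order Taylor bound $|\Gamma(\x_n,\y_n)|\le C|\x_n-\y_n|^2$ contradicts the assumed blow-up. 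Your primary argument instead invokes the uniform two-sided ball condition (positive reach) of a compact embedded $C^{1,1}$ surface and derives the inequality in one line with the explicit constant $C=1/(2r)$; your fallback chart-based argument is essentially a direct, quantitative version of the paper's Taylor step, splitting into the near regime (uniform $C^2$ graph bounds) and the far regime (where the ratio is trivially bounded — note the far-field constant should be $\operatorname{diam}(S)/\rho_0^2$, or more simply $1/\rho_0$ since the numerator is at most $|\y-\x|$, rather than $\operatorname{diam}(S)/\rho_0$; this is immaterial). What your approach buys is an explicit, geometrically meaningful constant and a fully constructive proof; what the paper's approach buys is brevity, at the cost of a terse extraction step and a non-quantitative constant. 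Both correctly identify that the only real content is the uniformity over the compact surface.
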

\begin{proof}
    Let us suppose there exist two sequences $(\x_n),(\y_n)$ in $S$ such that $\x_n \not = \y_n$ and $\frac{|\langle \y_n-\x_n,\n(\x_n)\rangle|}{|\y_n-\x_n|^2}\to \infty$.
    Up to an extraction, we can suppose that $\x_n \to \x_0\in S$. If $\y_n$ does not converges to $\x_0$, we can extract a subsequence such that $\frac{|\langle \y_n-\x_n,\n(\x_n)\rangle|}{|\y_n-\x_n|^2}$ does not diverge. This is a contradiction, hence both $\x_n$ and $\y_n$ converge to $\x_0 \in S$.
    
    Let $\Gamma(\x,\y)=\langle \y-\x,\n(\x)\rangle$. As $S$ is smooth, so is $\Gamma$. Its partial differentials are 
    \begin{align*}
        \forall \h \in T_xS, \quad d_\x\Gamma_{\x,\y}(\h)&=-\langle -\h \cdot \n(\x)\rangle + \langle \y-\x \cdot d\n_x(\h) \rangle \\
        \forall \h \in T_yS, \quad d_\y\Gamma_{\x,\y}(\h)&=\langle \h \cdot \n(\x)\rangle
    \end{align*}
    Thus, at the point $(\x_0,\x_0)$, both first derivatives vanish. As a consequence, for $n$ big enough there exists $C>0$ such that $\Gamma(\x_n,\y_n)\leq C |\x_n-\y_n|^2$, contradiction.
\end{proof}
Now, we need to find a minoration of $|\y-\x\pm \eps \n(\y)|$.
\begin{lemma}
    \label{lem:min}
    For $\eps$ small enough, for all $\mu>0$, $$|\x-\y\pm \eps \n(\y)|^\mu \geq \max \big( (\frac{1}{\sqrt{2}}|\x-\y|)^\mu,\eps^\mu \big).$$
\end{lemma}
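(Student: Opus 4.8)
The plan is to reduce to the case $\mu=1$ and then to a one-line computation based on the fact that the normal $\n(\y)$ is almost orthogonal to short chords of $S$, which is precisely the content of Lemma \ref{lem:reg1}. Since $t\mapsto t^\mu$ is increasing on $[0,\infty)$ and $\max(a,b)^\mu=\max(a^\mu,b^\mu)$ for $a,b\geq 0$, it is enough to establish, for $\eps$ small enough uniformly in $\x,\y\in S$, the inequality $|\x-\y\pm\eps\n(\y)|\geq \max\big(\tfrac{1}{\sqrt 2}|\x-\y|,\eps\big)$.

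The second step is to expand the square:
\[
|\x-\y\pm\eps\n(\y)|^2=|\x-\y|^2\pm 2\eps\,\langle \x-\y\cdot\n(\y)\rangle+\eps^2 .
\]
Applying Lemma \ref{lem:reg1} with the roles of $\x$ and $\y$ exchanged (the statement is symmetric under relabelling of the two points), there is a constant $C>0$ depending only on $S$ such that $|\langle \x-\y\cdot\n(\y)\rangle|\leq C|\x-\y|^2$ for all $\x\neq\y$ in $S$; hence
\[
|\x-\y\pm\eps\n(\y)|^2\geq (1-2C\eps)\,|\x-\y|^2+\eps^2 .
\]

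The third step is to choose $\eps\leq \tfrac{1}{4C}$, so that $1-2C\eps\geq\tfrac12$. Then the right-hand side above is at least $\tfrac12|\x-\y|^2+\eps^2\geq \max\big(\tfrac12|\x-\y|^2,\eps^2\big)$; taking square roots gives the claim for $\mu=1$, and raising to the power $\mu$ (using monotonicity of $t\mapsto t^\mu$ and $\max(a,b)^\mu=\max(a^\mu,b^\mu)$) concludes.

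I do not expect a genuine obstacle here; the only two points deserving attention are that the threshold for "$\eps$ small enough" must be taken uniform over all pairs $\x,\y\in S$, which is automatic since the constant $C$ produced by Lemma \ref{lem:reg1} is uniform (it comes from the compactness of $S$), and that the cross term $\pm 2\eps\langle \x-\y\cdot\n(\y)\rangle$ may be negative — but only its absolute value, bounded by $2C\eps|\x-\y|^2$, is used, so the sign is irrelevant.
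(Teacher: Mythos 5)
Your proposal is correct and follows essentially the same route as the paper: expand $|\x-\y\pm\eps\n(\y)|^2$, bound the cross term by $C\eps|\x-\y|^2$ via Lemma \ref{lem:reg1} (with the two points relabelled), and absorb it for $\eps$ small. You are in fact slightly more careful than the paper, which drops the factor of $2$ in the cross term and leaves the relabelling of Lemma \ref{lem:reg1} implicit; neither affects the conclusion.
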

\begin{proof}
    \begin{align*}
        |\y-\x\pm \eps \n(\y)|^2=& |\y-\x|^2 \pm \eps \langle \y-\x,\n(\y)\rangle + \eps^2&\\
        \geq& |\y-\x|^2 - C\eps |\y-\x|^2 + \eps^2 \quad & \text{by lemma \ref{lem:reg1}.}
    \end{align*}
    Thus for $\eps\leq 1/(2C)$, we have, $\forall \mu>0$,
    $$|\x-\y\pm \eps \n(\y)|^\mu \geq \max \big( (\frac{1}{\sqrt{2}}|\x-\y|)^\mu,\eps^\mu \big).$$    
\end{proof}
Using Lemmas \ref{lem:reg1} and \ref{lem:min}, for some constant C, $\big |\frac{\langle \y-\x,\n(\x)\rangle}{|\y-\x \pm\eps \n(\y)|^3} \big|$ is dominated by $C\frac{1}{|\y-\x|}$, which is integrable. By dominated convergence
\begin{equation}
  \int_{S} \langle \jone(\y)  \cdot \n(\x)\rangle \frac{\langle \y-\x,\n(\x)\rangle}{|\y-\x \pm\eps \n(\y)|^3} \jtwo(\x) d\x \xrightarrow{\mathfrak{X}^p(S)}\int_{S} \langle \jone(\y)  \cdot \n(\x)\rangle \frac{\langle \y-\x,\n(\x)\rangle}{|\y-\x|^3} \jtwo(\x) d\x,
\label{eq:Pluto}
\end{equation}
i.e.~we obtained integral \ref{eq:L2}.

Now we have to deal with $\frac{\eps\langle \n(\y), \n(\x) \rangle}{|\y-\x \pm\eps \n(\x)|^3}$, but we will show their net contribution to converge to zero.

To begin with, we could use the smallness of the term
$\langle \jone(\y) \cdot \n(\x)\rangle$ to ensure integrability.
Instead, we will prove the following lemma which will also be useful later.
Let $\Delta=\{(\z,\z) \mid \z\in S\}\subset S^2$.
\begin{lemma}
    \label{lem:reg2}
    Let $f_\eps: S^2 \setminus \Delta \ni (\x,\y) \mapsto  \frac{1}{|\y-\x +\eps \n(\y)|^3}-\frac{1}{|\y-\x-\eps \n(\y)|^3} d\x$.
    Then $\exists \eta>0,\exists M>0$, $\forall \alpha \in (-0.5,3.5), \forall \eps <\eta$, $\forall (\x,\y)$, $|\eps^{\alpha} f_\eps(\x,\y)|\leq M\frac{1}{|\x-\y|^{5/2-\alpha}}$. 
\end{lemma}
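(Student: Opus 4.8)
The plan is to bound the difference quotient $f_\eps(\x,\y)=|\y-\x+\eps\n(\y)|^{-3}-|\y-\x-\eps\n(\y)|^{-3}$ by interpolating between two competing estimates: one valid when $\eps$ is small relative to $r:=|\x-\y|$, and one valid when $\eps$ is large relative to $r$. Write $a_\pm=|\y-\x\pm\eps\n(\y)|^2=r^2\pm\eps\langle\y-\x,\n(\y)\rangle+\eps^2$. First I would record, from Lemma \ref{lem:reg1}, that $|\langle\y-\x,\n(\y)\rangle|\le Cr^2$ (the roles of $\x$ and $\y$ can be swapped in that lemma by the same compactness argument, or one uses the symmetric statement), so that $|a_+-a_-|=2\eps|\langle\y-\x,\n(\y)\rangle|\le 2C\eps r^2$, while Lemma \ref{lem:min} gives $a_\pm\ge\max(\tfrac12 r^2,\eps^2)$ for $\eps<\eta$.

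The first regime is $\eps\le r$. Here I would use the mean value theorem on $t\mapsto t^{-3/2}$ between $a_-$ and $a_+$:
\[
|f_\eps(\x,\y)|=\bigl| a_+^{-3/2}-a_-^{-3/2}\bigr|\le \tfrac32\,\xi^{-5/2}\,|a_+-a_-|,
\]
where $\xi$ lies between $a_-$ and $a_+$, hence $\xi\ge\tfrac12 r^2$. This yields $|f_\eps(\x,\y)|\le \tfrac32(\tfrac12 r^2)^{-5/2}\cdot 2C\eps r^2 = C'\,\eps\, r^{-3}$. Multiplying by $\eps^\alpha$ and using $\eps\le r$ to absorb powers: for $\alpha\le 3.5$ we have $\eps^{1+\alpha}\le \eps^{1+\alpha}$ and we want $\eps^{1+\alpha}r^{-3}\le r^{1+\alpha-3}=r^{\alpha-2}$; comparing with the target exponent $-(5/2-\alpha)=\alpha-5/2$ we need $r^{\alpha-2}\le C\,r^{\alpha-5/2}$, i.e. $r^{1/2}\le C$, true since $S$ is compact (diameter bounded). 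So in this regime $|\eps^\alpha f_\eps|\le M r^{-(5/2-\alpha)}$ with $M$ depending only on $S$ and on the bounds above, uniformly in $\alpha\in(-0.5,3.5)$ since the constants $C,C',\eta$ are $\alpha$-independent and the extreme exponents give the worst case.

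The second regime is $\eps>r$. Here I would bound each term crudely: $a_\pm\ge\eps^2$ gives $a_\pm^{-3/2}\le\eps^{-3}$, so $|f_\eps|\le 2\eps^{-3}$, whence $|\eps^\alpha f_\eps|\le 2\eps^{\alpha-3}$. Since $\alpha-3<0$ on the relevant range and $\eps>r$, we get $\eps^{\alpha-3}\le r^{\alpha-3}$; and $r^{\alpha-3}\le \mathrm{diam}(S)^{1/2}\,r^{\alpha-7/2}$... which is the wrong direction. The cleaner route in this regime is to keep $a_\pm\ge\tfrac12 r^2$ as well, giving $|f_\eps|\le 2(\tfrac12 r^2)^{-3/2}=C''r^{-3}$, then write $\eps^\alpha=\eps^\alpha$ and split: for $\alpha\ge0$ use $\eps^\alpha\le\eta^{\alpha}$... no — the honest move is $|\eps^\alpha f_\eps|\le\min(2\eps^{\alpha-3},C''\eps^\alpha r^{-3})$ and interpolate these as $(\text{first})^{1/2}(\text{second})^{1/2}\le\sqrt{2C''}\,\eps^{\alpha-3/2}r^{-3/2}\le\sqrt{2C''}\,\eta^{?}\cdots$; using $\eps<\eta$ bounded and, if $\alpha-3/2\ge0$, $\eps^{\alpha-3/2}\le\eta^{\alpha-3/2}$, while if $\alpha-3/2<0$, $\eps^{\alpha-3/2}\le r^{\alpha-3/2}$ since $\eps>r$. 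Both sub-cases deliver $|\eps^\alpha f_\eps|\le M\,r^{-(5/2-\alpha)}$ after absorbing bounded powers of $\eta$ and $\mathrm{diam}(S)$ into $M$.

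The main obstacle is the bookkeeping of exponents so that a single $M$ works uniformly for all $\alpha\in(-0.5,3.5)$ and all $\eps<\eta$: one must check that in each regime the exponent of $r$ produced is $\le -(5/2-\alpha)$ up to a bounded factor, and that the $\alpha$-dependence of all constants ($C$ from Lemma \ref{lem:reg1}, the MVT constant $3/2$, $\eta$ from Lemma \ref{lem:min}) is in fact absent. The geometric-mean interpolation between the ``near'' bound $\eps\,r^{-3}$ and the ``far'' bound $\eps^{-3}$ (equivalently $r^{-3}$) is what produces the non-integer exponent $5/2$, and getting that balance exactly right — choosing the interpolation weight so the $r$-power lands at $\alpha-5/2$ — is the one computation that actually needs care; everything else is dominated convergence–style domination using the two lemmas already proved.
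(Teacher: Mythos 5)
Your first regime is essentially sound and close in spirit to the paper's argument: the paper also bounds the numerator of the difference via Lemma \ref{lem:reg1} (using the factorization $a^3-b^3=(a-b)(a^2+ab+b^2)$ and the $1/2$-H\"older property of the square root, where you use the mean value theorem on $t\mapsto t^{-3/2}$ applied to $a_\pm$) and then controls the denominator via Lemma \ref{lem:min}. For $\eps\le r$ your chain $|f_\eps|\le C'\eps r^{-3}$, $\eps^{1+\alpha}\le r^{1+\alpha}$, $r^{\alpha-2}\le \mathrm{diam}(S)^{1/2}r^{\alpha-5/2}$ is correct.

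The second regime, however, contains a genuine gap. Once you ``bound each term crudely'' you have discarded the cancellation between $a_+^{-3/2}$ and $a_-^{-3/2}$, and the resulting estimates $|f_\eps|\le 2\eps^{-3}$ and $|f_\eps|\le C''r^{-3}$ have total homogeneity $-3$, while the target $\eps^{-\alpha}r^{\alpha-5/2}$ has homogeneity $-5/2$; no interpolation between two degree-$(-3)$ bounds can produce a degree-$(-5/2)$ bound near $\eps\sim r$. Concretely, your geometric mean gives $|\eps^\alpha f_\eps|\le \sqrt{2C''}\,\eps^{\alpha-3/2}r^{-3/2}$, and neither sub-case closes: for $\alpha<3/2$ you get $\eps^{\alpha-3/2}\le r^{\alpha-3/2}$, hence the bound $r^{\alpha-3}=r^{-1/2}\,r^{\alpha-5/2}$, off by an unbounded factor $r^{-1/2}$; for $\alpha\ge 3/2$ you get $C\eta^{\alpha-3/2}r^{-3/2}$, and $r^{-3/2}\le M r^{\alpha-5/2}$ requires $r^{1-\alpha}\le M$, which fails as $r\to 0$ since $\alpha>1$. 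The fix is to keep the cancellation in this regime as well: retain $|a_+-a_-|\le 2C\eps r^2$ and use the other branch of Lemma \ref{lem:min}, $\xi\ge\eps^2$, in the mean value theorem, giving $|f_\eps|\le C\eps^{-4}r^{2}$ and hence $|\eps^\alpha f_\eps|\le C\eps^{\alpha-4}r^2\le C r^{\alpha-2}\le C' r^{\alpha-5/2}$ when $\eps>r$. The paper avoids the case split altogether by writing the single lower bound $\max(r/\sqrt2,\eps)^{4}\ge (r/\sqrt2)^{4-\nu}\eps^{\nu}$ for $\nu\in[0,4]$ and letting the free parameter $\nu=\alpha+1/2$ do the interpolation; either repair works, but as written your $\eps>r$ case does not prove the stated inequality.
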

\begin{proof}
    
    \begin{align*}
        f_{\eps}(\x,\y)=& \frac{|\y-\x-\eps \n(\y)|^3-|\y-\x+\eps \n(\y)|^3}{|\y-\x+\eps \n(\y)|^3|\y-\x-\eps \n(\y)|^3}\\
        =&\big( |\y-\x-\eps \n(\y)|-|\y-\x+\eps \n(\y)| \big) \times\\
        &\frac{\big( |\y-\x+\eps \n(\y)|^2+|\y-\x+\eps \n(\y)||\y-\x-\eps \n(\y)|+|\y-\x-\eps \n(\y)|^2 \big) }{|\y-\x+\eps \n(\y)|^3|\y-\x-\eps \n(\y)|^3}
    \end{align*}
    Using the fact that square root is $1/2$-H\"older ($a\geq b\geq 0, \sqrt{a}-\sqrt{b}\leq \sqrt{a-b}$)  and Lemma \ref{lem:reg1}, there exists $C>0$ such that
    $$\big| |\y-\x+\eps \n(\y)|-|\y-\x-\eps \n(\y)| \big|\leq 2\sqrt{\eps |\langle \x-\y,\n(\y)\rangle|}\leq C \sqrt{\eps} |\x-\y|$$
    Now we use the minoration of the denominator from Lemma \ref{lem:min}.
    Up to a global multiplicative constant $M$, we get,
    for any $0\leq \nu \leq 4$,
    \begin{align*}
        |f_{\eps}(\x,\y)|&\leq 4 C\frac{\sqrt{\eps} |\x-\y|}{|\x-\y|^{4-\nu}\eps^\nu}  \\
        &\leq M\frac{1}{\eps^\alpha |\x-\y|^{5/2-\alpha}}     
    \end{align*}
for any $-0.5 \leq \al \leq 3.5$
\end{proof}
Thanks to Lemma \ref{lem:reg2} with any $\alpha \in (1/2,1)$, there exists $C>0$ such that
$$|\int_{S} \big( \frac{\eps \langle \n(\y), \n(\x) \rangle}{|\y-\x+\eps \n(\y)|^3} - \frac{\eps \langle \n(\y), \n(\x) \rangle}{|\y-\x-\eps \n(\y)|^3} \big) \jtwo(\x) d\x|_{\R^3} \leq C \int_S \frac{\eps}{\eps^\al |x-y|^{5/2-\al}}|\jtwo|.$$
Using an Hardy-Littlewood-Sobolev inequality (e.g. \cite{han_hardylittlewoodsobolev_2016}) for $ 1\leq p <\infty$, there exists $C_\al >0$ such that
$$|\int_{S} \big( \frac{\eps \langle \n(\y), \n(\x) \rangle}{|\y-\x+\eps \n(\y)|^3} - \frac{\eps \langle \n(\y), \n(\x) \rangle}{|\y-\x-\eps \n(\y)|^3} \big) \jtwo(\x) d\x|_{\mathfrak{X}^p(S)}\leq C_\al \eps^{1-\al} |\jtwo|_{\mathfrak{X}^{1,2}(S)}.$$
Thus 
\begin{equation}
  \label{eq:NonnaPapera}
\int_{S}\langle \jone(\y)  \cdot \n(\x)\rangle \big( \frac{\eps \langle \n(\y), \n(\x) \rangle}{|\y-\x+\eps \n(\y)|^3}- \frac{\eps \langle \n(\y), \n(\x) \rangle}{|\y-\x-\eps \n(\y)|^3} \big) \jtwo(\x) d\x \xrightarrow{\mathfrak{X}^p(S)}0\end{equation}

In summary, Eq.~\ref{eq:surfint2} is the sum of two integrals converging
respectively as in Eq.~\ref{eq:Pluto} and \ref{eq:NonnaPapera}.  
Ultimately the ``first normal term'' converges to Eq.~\ref{eq:L2}. 

\subsubsection{Proof: Second normal term} \label{subsec:norm2}

The same reasoning just applied to integral \ref{eq:surfint2} also
applies to integral \ref{eq:surfint4}, 
$$\int_S \langle \jone(\y) \cdot \jtwo(\x) \rangle \frac{\langle \y-\x, \n(\x) \rangle \pm \eps \langle \n(\y), \n(\x) \rangle}{|\y-\x \pm \eps \n(\y)|^3} d\x,$$
which converges to
$$\int_S \langle \jone(\y) \cdot \jtwo(\x) \rangle \frac{\langle \y-\x, \n(\x) \rangle}{|\y-\x|^3} d\x,$$
i.e.~to Eq.~\ref{eq:L4}.

This concludes the proof of Theorem \ref{th:main}: one by one, in
Secs.~\ref{subsec:tang1}-\ref{subsec:norm2}, we have obtained all terms in
Eqs.~\ref{eq:L1}-\ref{eq:L4}.

Note that we do not expect $\int_S \langle \jone(\y) \cdot \jtwo(\x) \rangle \frac{\eps \langle \n(\y), \n(\x) \rangle}{|\y-\x+\eps \n(\y)|^3} d\x$ to go to 0 as that term is responsible for the magnetic field discontinuity.
But we are still able to use Lemma \ref{lem:reg2} to control the term $\frac{\eps}{|\y-\x+\eps \n(\y)|^3}-\frac{\eps}{|\y-\x-\eps \n(\y)|^3}$.

\begin{remark}
    \label{rmk:notLinfty}
    We do not expect $\L(\jone,\jtwo)$ to be in $L^\infty(S,\R^3)$. Indeed, $H^1(S)$ is not embedded in $L^\infty(S)$ for manifolds of dimension 2. For example, there is no constant $C>0$ such that $\big| \int_S \frac{1}{|\y-\x|}  \dive_\x(\pi_\x \jone(\y)) \jtwo(\x) d\x \big|_{L^\infty(S,\R^3)} \leq C |\jone|_{\mathfrak{X}^{1,2}(S)} $.
\end{remark}

\subsection{Justification from a 3D current modelisation}
\label{subsec:3D_current}
Recapitulating, the Laplace fore has been initially defined as 
the $\eps \to 0$ limit of the semi-sum of the magnetic field evaluated
at a distance $\eps$ away from the CWS, $S$, respectively
inward and outward (Eq.~\ref{eq:L_eps}). This was shown to either
be numerically costly or subject to numerical errors (Figure \ref{fig:B_norm}).

An expression (Eqs.~\ref{eq:L1}-\ref{eq:L4}) has then been derived in
Theorem \ref{th:main} for the Laplace force exerted by one current-sheet
on another, per unit length. The special case $\jone = \jtwo$ describes
the self-interaction of a current-sheet.

Both treatments relied on an intrinsically 2D model for the currents
on the CWS. A third approach is to treat the CWS as a
3D layer of infinitesimal thickness $\eps$.  
For some $\y \in S$ and if $\j$ is smooth enough, we could compute the
$\eps \to 0$ limit of
\begin{eqnarray*}
    \tilde \Leps(\j_\eps)(\y)=\int_{-\eps/2}^{\eps/2} \big[\j_\eps(\y+\eps_1\n(\y)) \times \B(\y+\eps_1 \n(\y)) \big] d\eps_1.
\end{eqnarray*}
Note that $\B(\y+\eps_1 \n(\y))$ is well-defined as we integrate on a 3D domain, and is given by:
\begin{eqnarray*}
  \B(\y+\eps_1 \n(\y)) = \int_S \int_{-\eps/2}^{\eps/2} \big[
    \j_\eps(\x+\eps_2 \n(\x)) \times \frac{\y-\x+ \eps_1 \n(\y)- \eps_2 \n(\x)}{|\y-\x+\eps_1 \n(\y)- \eps_2 \n(\x)|^3}
    \big]  dS(\x) d\eps_2.
\end{eqnarray*}
In order to approximate the 3D volume with a 2D current-sheet,
we suppose that $\forall z\in S$ and $\forall \eps'$, it is
$\j_\eps(\z+\eps' \n(\z))= \frac{\j(\z)}{\eps}$. Thus,
\begin{eqnarray*}
  \tilde \Leps(\j_\eps)=&\frac{1}{\eps^2} \int_{-\eps/2}^{\eps/2} \int_{-\eps/2}^{\eps/2} \big\{
  \int_S \j(\y)\times
  \big[
  \j(\x) \times \frac{\y-\x+\eps_1 \n(\y)- \eps_2 \n(\x)}{|\y-\x+\eps_1 \n(\y)- \eps_2 \n(\x)|^3}
  \big] dS(\x)
    \big\} d\eps_2 d\eps_1
\end{eqnarray*}
The quantity inside the brackets is very close to the one we got in
Theorem~\ref{th:main}, starting with Eqs.~\ref{eq:BS} and \ref{eq:L_eps}, 
except that we also have a contribution from $\eps_2 \n(\x)$.
It is possible to prove, using an argument similar to Lemma \ref{lem:reg2},
that replacing $\n(\x)$ with $\n(\y)$ does not change the limit.
The intuition is that for $\x$ close to $\y$, $\n(\x)$ is close to $\n(\y)$.
As a result, $\tilde \Leps(\j)$ has the same limit as $\Leps(\j)$ and the expression we found for the Laplace force (Eqs.~\ref{eq:L1}-\ref{eq:L4})
is consistent.

\section{Examples of cost functions}\label{sec:costs}

After having rigorously defined the Laplace force-density $\L(\j)(\y)$ that a
  current-sheet of density $\j$ exerts on itself at location $y$
  (Eqs.~\ref{eq:L1}-\ref{eq:L4} for $\jone = \jtwo = \j$),
  we now introduce some cost-functions to penalize high values of the force.

  Two main options are possible, and considered here:
  (1) penalizing high cumulative (or, equivalently, surface-averaged) forces,
  or (2)
  penalizing or even forbidding excessively high local maxima of the force. 
  Further variants are possible for specific
  force-components (e.g.~tangential or normal to the CWS) or a 
  weighted combination of them,
  with higher weights assigned to the engineeringly
  more demanding component, depending on the specific stellarator design.
  Such variants go beyond the scope of the present paper, and are left for
  future work.
 
A natural choice from the functional analysis point of view is to use
a penalization of the form
\begin{equation}
  |\L(\j)|_{L^p(S,\R^3)}=\big( \int_S |\L(\j)(\x))|^p d\x \big)^{1/p}.
  \label{eq:CumulCost}
  \end{equation}
The case $p=2$ is well-known: it represents the cumulative 
(or, barring a factor, the surface-averaged) root-mean-square force. 
Higher values of $p$ penalize more severely high values of
the Laplace force (i.e., large oscillations around the average norm). 
By contrast, low values of $p$
penalize the average norm of the Laplace force.

In principle it is also possible to use a $L^{\infty}$ cost, $\sup_S |\L(\j)|$,
but the domain might be smaller than $\mathfrak{X}^{1,2}(S)$.
However, such cost is not differentiable whenever
the maximum is reached at multiple locations.

The second option is to introduce the cost
\begin{equation}
    \label{eq:C_e}
    C_e(j)= \int_{S}f_e\left( \L(\j)(\x) \right) d\x
\end{equation}
as the surface integral of the local cost
\begin{equation}
  f_e(w)=\frac{\max(w-c_0,0)^2}{1-\frac{\max(w-c_0,0)}{c_1-c_0}}.
  \label{eq:f_e}
\end{equation}
\begin{figure}
    \includegraphics[scale=0.5]{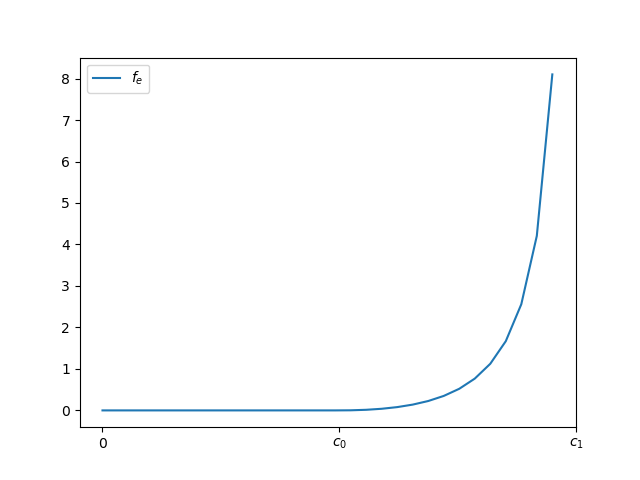}
    \captionof{figure}{Plot of the local cost $f_e$ as a function of
      the local force $w$. Note that $f_e$ diverges at $c_1$ and vanishes
      in $[0,c_0]$.}
    \label{fig:f_e}
\end{figure}
The domain for this cost is not the entire space $\mathfrak{X}^{1,2}(S)$,
but this cost captures more effectively the engineering
constraints of building a high-field stellarator: the mechanical properties of
support-structures and materials are such that forces below a threshold $c_0$
are negligible, forces higher and higher
than $c_0$ should be penalized more and more, and forces
above a second, ``rupture'' threshold $c_1$ should be completely forbidden. 
Indeed, the local cost $f_e$ evolves with the local force $w$
as desired, as illustrated in Figure \ref{fig:f_e}.

\begin{remark}
  It is unclear whether a minimizer exists
  in $\mathfrak{X}^{1,2}(S)$ for the costs discussed.
  As a consequence, a good practice is to add a regularizing
  term $|\j|_{\mathfrak{X}^{1,2}(S)}$.
\end{remark}

\section{Numerical simulations}
\label{sec:numerics}
\subsection{Setup}

To test our force-reduction method, we ran simulations for the NCSX
stellarator equilibrium known as LI383 \cite{Zarn}.

As mentioned in the Introduction, the costs defined in
Sec.~\ref{sec:costs} are easily added to the cost-function in any stellarator
coil optimization code. In our case such code was a new incarnation of
{\tt REGCOIL}, which we rewrote in python instead of fortran, and compiled with
the Just In Time compiler Numba \cite{Numba}. 
  For the most part the new code is conceptually identical to {\tt REGCOIL},
  except that it uses Eq.~A5 of Ref.~\cite{Landreman} in lieu of its normal,
  single-valued component (Eq.~A8 from the same paper). Eq.~A5 would be
  numerically unstable if derivatives were taken by finite differences,
but can be used here because we compute the derivatives explicitly.
We benchmarked the new code and found it to 
agree with the original {\tt REGCOIL} to within 7 significant digits.

The surface-current $\j$ is divergence-free and thus taken in the form
\begin{equation}
    \label{eq:expression_j}
    G  \frac{\partial {\bf r'}}{\partial \theta} -I \frac{\partial {\bf r'}}{\partial \zeta}+ \frac{\partial \Phi'}{\partial \zeta'} \frac{\partial {\bf r'}}{\partial \theta'}-\frac{\partial {\bf r'}}{\partial \zeta'}\frac{\partial \Phi'}{\partial \theta'}.
\end{equation}
Here $\theta$ and $\zeta$ are the poloidal and toroidal angle,  
$G$ and $I$ are optimization inputs (net poloidal and toroidal currents)
and the current potential $\Phi$ is decomposed in a 2D Fourier basis.
\begin{center}
    \includegraphics[scale=0.5]{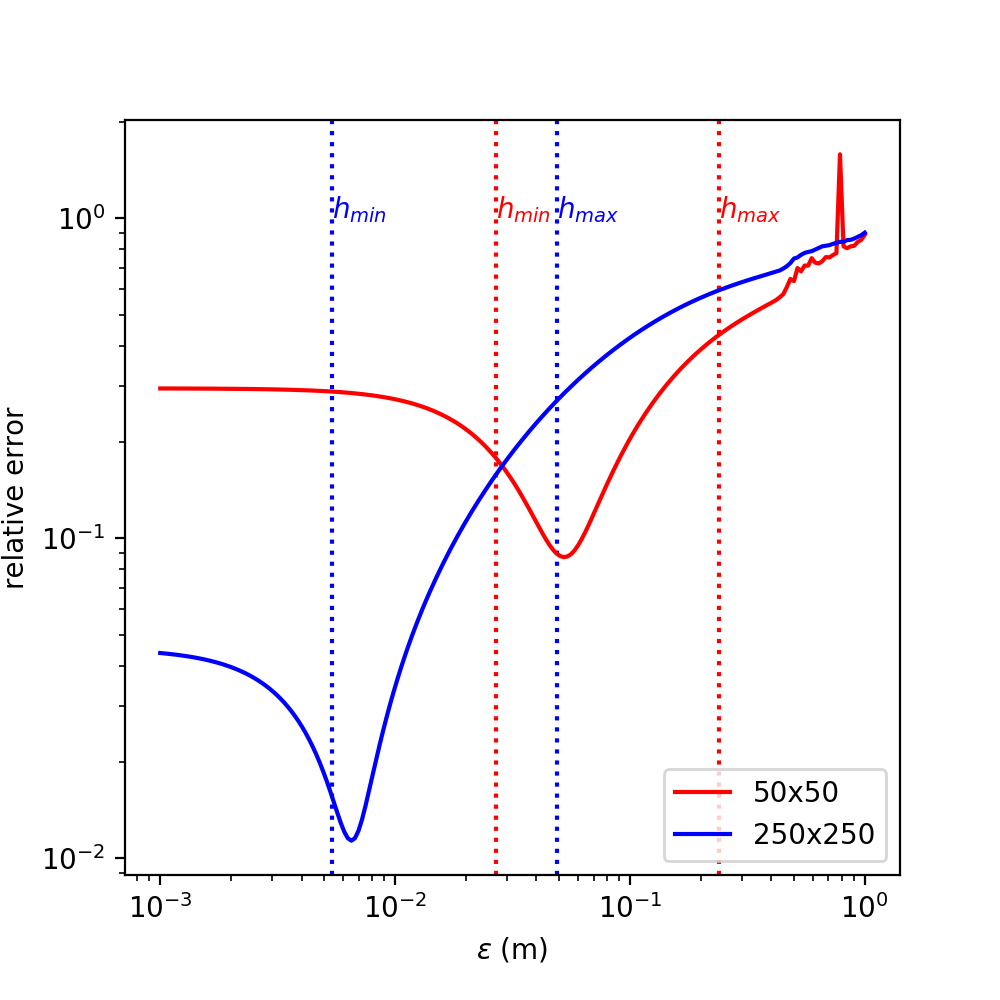}
    \captionof{figure}{Convergence of $\Leps$ toward $\L$ for NCSX, for two different grids. Convergence stops when $\eps \lesssim h$, due to a numerical
    error in $\B$ (Figure \ref{fig:B_norm}).}
    \label{fig:L_eps}
\end{center}
\begin{center}
    \includegraphics[width=\textwidth]{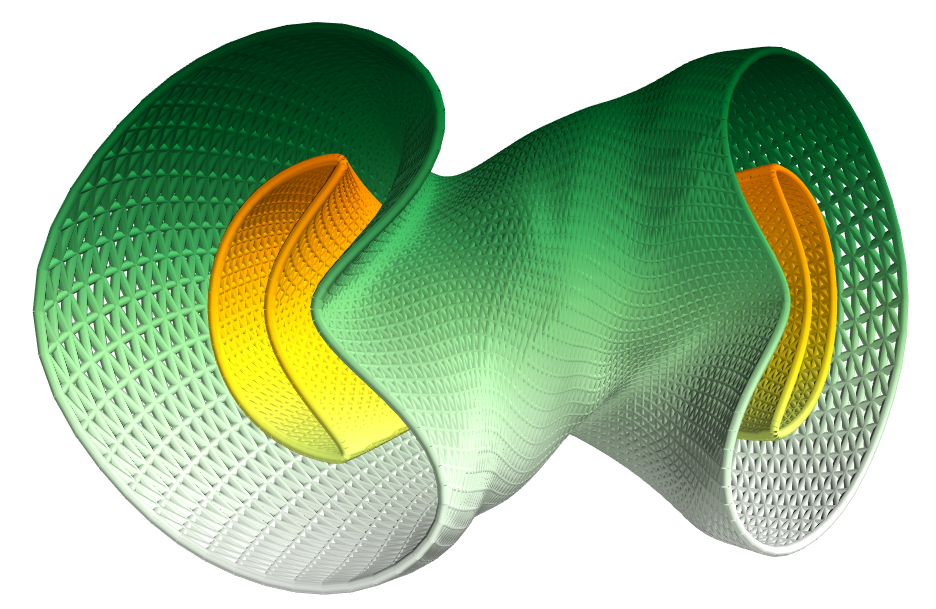}
    \captionof{figure}{NCSX LI383 plasma surface in orange and CWS in green.}
    \label{fig:cws}
\end{center}
Figure \ref{fig:L_eps}
illustrates how $\Leps$ converges to $\L$ (or, equivalently, the relative
error vanishes) as $\eps \to 0$. 
We recall that the numerical evaluation of $\Leps$ involves three characteristic distances $h$ (discretisation length of the mesh), $\eps$, and $d_B$ (characteristic distance of variation of the magnetic field).
For reference we computed $\L$ on the same mesh (that is, for the same $h$), 
and obviously $d_B$ was also the same.

We observe that the error decreases with $\eps$, as expected,
but when $\eps \lesssim h$ the convergence stops and the error grows again.
This is a consequence of the calculation of $\B$ not being
accurate anymore, for $\eps \lesssim h$ (see Figure \ref{fig:B_norm}).
Note that the reference value itself is an approximation. As we do not have an
analytic expression, we also used a discretisation. The relative error is
computed in $L^2$ norm.

In all simulations presented here, both the CWS and the plasma surface
were discretized as $64\times64$ meshes in the poloidal$\times$toroidal
direction. The two surfaces are rendered in 3D in Figure \ref{fig:cws}.

The single-valued current potential $\Phi$ from which $\j$ descends
is represented by 8 or 12 harmonics in each direction. As we do not impose
stellarator symmetry, we use as a basis the functions 
$$\sin (k\theta +l \zeta), \quad \cos(k\theta +l \zeta)$$
with $0 \leq k \leq N$ and $-N\leq l \leq N$.
Since for $k=0$ we can restrict to $0 < l$, the total number of
Degrees Of Freedom (DOF) is $2 [(2N+1)N +N]$.

Thus $N=$8 harmonics in each direction correspond to 288 DOF, and $N=$12
yields 624 DOF. 
Better results can be achieved with more harmonics, as shown in
Figure \ref{fig:regcoil_compare1}. However, a finer mesh is required,
making the problem computationally more expensive.

The optimization is performed by conjugate gradient. 
With our implementation, a single evaluation of the gradient lasts
approximately 2 minutes on a small cluster of 64 cores. 
The full optimization can last a few days.

\subsection{Adding force minimization and improving regularization in 
  {\tt REGCOIL}}   \label{subsec:AddForceMin}

We propose to integrate the costs introduced above in the same optimization
scheme as {\tt NESCOIL} \cite{Merkel} and {\tt REGCOIL} \cite{Landreman}.

As a reminder, {\tt NESCOIL} seeks the current of density $\j$, on a fixed $S$,
that maximizes magnetic field accuracy on the plasma boundary $S_P$ (hence,
indirectly, in the plasma). It does so by minimizing the ``plasma-shape
objective'' or ``field accuracy objective''
\begin{equation}
  \chi^2_B=\int_{S_P} \langle \B(\x) \cdot \n(\x) \rangle^2 dS(\x).
  \label{eq:chiB}
\end{equation}

{\tt REGOIL}, instead, compromises between field accuracy and coil
simplicity by minimizing $\chi^2_B+\lambda \chi^2_\j$, where $\lambda$ is a
weight and the ``current-density objective'' or
``regularizing term'' $\chi^2_\j$ is a penalty on high values of
$\j$, in the sense of the $L^2$ norm: 
\begin{equation}
  \chi^2_\j= \int_{S} |\j|^2 dS.
  \label{eq:chij}
\end{equation}
Heavier weighting makes $\Phi$ (hence $\j$, hence the coils) more regular,
but at the expense of reduced field accuracy. Such cost is identical to
$\chi_K^2$ of Ref.~\cite{Landreman}, but is renamed $\chi_\j^2$
for consistency of notation with another regularizing term that
we need to introduce: 
\begin{equation}
  \chi_{\nabla\j}^2 =
  \int_{S} (|\nabla \j_x|^2+|\nabla \j_y|^2+|\nabla \j_z|^2) dS.
  \label{eq:chinabla}
\end{equation}
This new term is motivated by Theorem~\ref{th:main}: 
as the Laplace force can only be defined for $\j \in \mathfrak{X}^{1,2}(S)$,
it is natural to add a penalization on the gradient of $\j$ and not just
on $\j$. Basically we are replacing the $L^2$ norm of $\j$ 
with the $H^1$ norm of $\j$.

Here we propose to further generalize the {\tt REGCOIL} cost function to
\begin{equation}
  \chi ^2 = \chi_B ^2 + \lambda_1 \chi_\j ^2 + +\lambda_2 \chi_{\nabla\j}^2 +
  \gamma \chi_F ^2,
  \label{eq:chi}
\end{equation}
where $\chi_F^2$ is a ``force objective'' that
penalizes strong forces on the current-sheet, i.e.~among 
the coils. Per the discussion in Sec.~\ref{sec:costs}, possible definitions
include:
\begin{align}
  \chi_F ^2 &= |\L(\j)|_{L^2(S,\R^3)}^2 =\int_{S} |\L(\j)|^2 dS
  \label{eq:chiF1}
  \\ 
  \chi_F ^2 &= C_e = \int_{S} f_e(|\L(\j)|) dS
  \label{eq:chiF2}
\end{align}

with $f_e$ defined as in Eq.~\ref{eq:f_e} and plotted in
Fig.~\ref{fig:f_e}. As stress limits, here we set $c_0=5\cdot 10^6$ Pa 
and $c_1=10^7$ Pa.

\subsection{Numerical results}

There is obviously a trade-off between conflicting objectives
  in Eq.~\ref{eq:chi}. 
To study that, we compared a {\tt REGCOIL}-like case
with a force-minimization case.
  
  In the {\tt REGCOIL}-like case (curves in Fig.~\ref{fig:regcoil_compare1})
  we fixed $\lambda_2 = \gamma = 0$ and minimized
  $\chi^2 = \chi_B ^2 + \lambda_1 \chi_\j ^2$
  for various choices of $\lambda_1$. By this scan we re-obtained 
  the well-known trade-off between $\chi_B^2$ and $\chi_\j^2$
  (or, equivalently, field-accuracy and coil-simplicity) exhibited by 
  {\tt REGCOIL} \cite{Landreman} (not plotted).
  Interestingly, we also found a trade-off between $\chi_B^2$ and $\chi_F^2$,
  even though $\chi_F^2$ was not part of the  
  $\chi_B ^2 + \lambda_1 \chi_\j ^2$ minimization.
  The trade-off between these global quantities 
  is plotted in Fig.~\ref{fig:regcoil_compare1}a, and a
  trade-off between related, local quantities
  is plotted in Fig.~\ref{fig:regcoil_compare1}b. In other words,
  more (less) accurate solutions tend to be subject to higher (lower) forces,
  even when forces are not accounted in the minimization. 

  In the force-minimization case, instead,
  we fixed $\lambda_1 = \lambda_2 = 0$ and minimized
  $\chi^2 = \chi_B ^2 + \gamma \chi_F ^2$ for various choices of $\gamma$.
  Not surprisingly, we found a trade-off between $\chi_B^2$ and $\chi_F^2$ 
  (symbols in Fig.~\ref{fig:regcoil_compare1}).
  Interestingly, we also found a trade-off between $\chi_B^2$ and $\chi_\j^2$,
  even though $\chi_\j^2$ was not part of the minimized cost. 
  This suggests that $\chi_F$ has a regularizing effect on $\j$, as it will
  become apparent in Fig.~\ref{fig:multi_plots} and \ref{fig:multi_plots2}.
  
Finally, Fig.~\ref{fig:regcoil_compare1} confirms
that a higher number of Fourier harmonics and hence of DOF reproduces the
magnetic field more accurately. This is why for the remainder of the article we
adopt the higher number of DOF, 624.

Also, we no longer scan the weights, but fix them to yield reasonable
compromises between field accuracy, current regularization
and/or force minimization.  In particular,
calculations were performed for the following four
choices of weights and $\chi²_F$ in Eq.~\ref{eq:chi}:

\begin{align}
  \label{tab}
\begin{tabular}{c|cccc} 
  Case &  $\lambda_1$         & $\lambda_2$ &   $\gamma$      &  $\chi²_F$  \\
       & (T$^2$ m$^2/$A$^2$)  & (T$^2$ m$^4/$A$^2$) &  (T$^2$/Pa$^2$) &      \\
  \hline
  1    &  $1.5\cdot 10^{-16}$ &0  &   0          &     0        \\
  2    &  0                   &0 &   $10^{-17}$  & $|\L(\j)|_{L^2(S,\R^3)}^2$ \\
  3    &  0                   &0 &   $10^{-16}$  &   $C_e$     \\
  4    &  $10^{-19}$           & 10$^{-19}$ &   $10^{-16}$  &   $C_e$       
\end{tabular}
\end{align}
Case 1 is basically {\tt REGCOIL}, whereas
case 2 and 3 are effectively {\tt NESCOIL} but with minimized forces, according
to two different force metrics. Finally, case 4 explicitly
combines force minimization with regularization, but
in a broader sense compared to {\tt REGCOIL}, as discussed in connection
with Eq.~\ref{eq:chinabla}. 

The results for these four cases are plotted in
Fig.~\ref{fig:regcoil_compare2} (circles)
and compared with {\tt REGCOIL} results (curve).
In particular Fig.~\ref{fig:regcoil_compare2}a refers to surface-integrated,
``global'' objectives, and Fig.~\ref{fig:regcoil_compare2} to ``local'' maxima. 
Note the logarithmic plots. As expected, case 1 agrees with {\tt REGCOIL}. 
Case 2 (defined in terms of the ``global'' $|\L(\j)|_{\mathfrak{E}^2}^2$)
overperforms in the ``global'' Fig.~\ref{fig:regcoil_compare2}a, as expected.
Actually, it performs better than {\tt REGCOIL} even in terms of local metrics
(Fig.~\ref{fig:regcoil_compare2}b). 
Compared to {\tt REGCOIL}, peak-forces are reduced in cases 3 and 4
(Fig.~\ref{fig:regcoil_compare2}b), and remain lower than the chosen $c_1$, 
as is expected from the definition of $C_e$ and $f_e$
(Eqs.~\ref{eq:C_e}-\ref{eq:f_e})
However, this happens at the expense of higher
cumulative forces (Fig.~\ref{fig:regcoil_compare2}a). 

Details on the four cases are presented in Fig.~\ref{fig:multi_plots}
and  \ref{fig:multi_plots2}. Columns from left to right refer to cases
from 1 to 4. From top to bottom, the rows in Fig.~\ref{fig:multi_plots}
present contours of the norm of $\j$, the component of the magnetic field
normal to $S$ and the norm of the Laplace force, as functions of the poloidal
and toroidal angles. The two rows in Fig.~\ref{fig:multi_plots2}
present the force components normal and tangential to $S$.

As anticipated, case 2 is as regular as case 1, in spite of its $\chi^2$
not containing a regularizing objective.
By contrast, case 3 reproduces the field with high accuracy and exhibits
reduced peak forces, as expected from the definition of $C_e$, 
but with a complicated current-pattern. That is ameliorated by adding
some regularization: case 4 is the best compromise between coil simplicity
  (first row in  Fig.~\ref{fig:multi_plots2}), field accuracy (second row) and
  reduced forces (third row).

Incidentally all cases, including {\tt REGCOIL} (case 1) and
  the magnetically most accurate case 3, 
  exhibit residual field errors of up to 60 mT. Lower errors can be achieved
  by adopting a higher number of DOF, as is intuitive and suggested by
  Fig.~\ref{fig:regcoil_compare1}, but this is computationally more intensive and
beyond the scope of the present paper.

From the point of view of the surface-integrated or surface-averaged forces,
the best result in Fig.~\ref{fig:multi_plots} is a modest 
reduction by 5\% for case 2, relative to {\tt REGCOIL}. 
From the point of view of peak forces, however, the best result in 
Fig.~\ref{fig:multi_plots} is a 
reduction by 40\% for case 4, relative to {\tt REGCOIL}. 
Correspondingly, the peak tangential force is reduced by 50\% and the
peak normal force by 20\% (Fig.~\ref{fig:multi_plots2}). Note that maxima for
different components occur at different toroidal and poloidal locations.

More dramatic reductions were obtained in Fig.~\ref{fig:regcoil_compare1}, especially in peak forces. 
However, they were obtained for low-accuracy cases on the top left of
Fig.~\ref{fig:regcoil_compare1}b: a stellarator with those characteristics
would suffer from very low coil-forces, but it would also be a poor match
of the target field.

\section{Summary, conclusions and future work}
        To summarize, force-minimization is an important aspect of
          stellarator coil-optimization, especially for future high-field stellarators.
          In the present article we 
rigorously proved in Sec.~\ref{subsec:Theo} that
the Laplace force exerted by a surface-current onto one another can be written as in
Eqs.~\ref{eq:L1}-\ref{eq:L4}). From that, one can calculate the auto-interaction $\L(\j)$ 
of a current-distribution with itself, and distill that information in a single scalar.
Possible metrics were discussed in Sec.~\ref{sec:costs}, and two of them were used for
detailed numerical calculations: two possible ``force objectives'' (Eqs.~\ref{eq:chiF1} and
\ref{eq:chiF2}) were added to the cost function of the well-known {\tt REGCOIL} code
\cite{Landreman}. In addition, the $L^2$ norm of $\j$ was replaced  
with the $H^1$ norm of $\j$, for reasons explained in Secs.~\ref{subsec:notations} and
\ref{subsec:AddForceMin}.

This approach permitted to
simultaneously optimize the coils of the NCSX stellarator 
for high magnetic fidelity, high regularity and low forces, e.g.~40\% lower
peak forces compared to {\tt REGCOIL} for similar plasma shape accuracy,
slightly higher average current and lower current peak
(Figs~\ref{fig:regcoil_compare1}-\ref{fig:multi_plots2}).

Force reduction is an important criterion in stellarator
optimization, and future high-field designs might benefit from our approach. 

In the present work the Coil Winding Surface (CWS) was fixed.
Future shape optimization of the CWS, inspired by Ref.~\cite{paul2020adjoint}, 
is expected to further reduce the coil-forces.
Moreover, the constraints $c_0$ and $c_1$ on penalized and
forbidden forces (Fig.~\ref{fig:f_e}) were fixed. Future work could  
impose stricter constraints and tailor them differently for normal and
longitudinal forces, as they tend to differ (Fig.~\ref{fig:multi_plots2}) 
and obey to different engineering and material constraints. 
Finally, the new code is magnetically as accurate as {\tt REGCOIL}, when
operated with the same number of Fourier harmonics and hence of degrees of
freedom (DOF). However, it is also
significantly slower as a result of force minimization. 
Optimizing the code for speed would allow to retain a higher number of DOF
and achieve higher field accuracy for the same amount of cpu time, while at
the same time optimizing for simplicity and force reduction.

\section{Acknowledgements}

The authors thank Mario Sigalotti for the fruitful discussions and for 
carefully reading the manuscript. 
This work has been partly supported by Inria's Action Exploratoire
  StellaCage.

\begin{figure}
    \includegraphics[width=\textwidth]{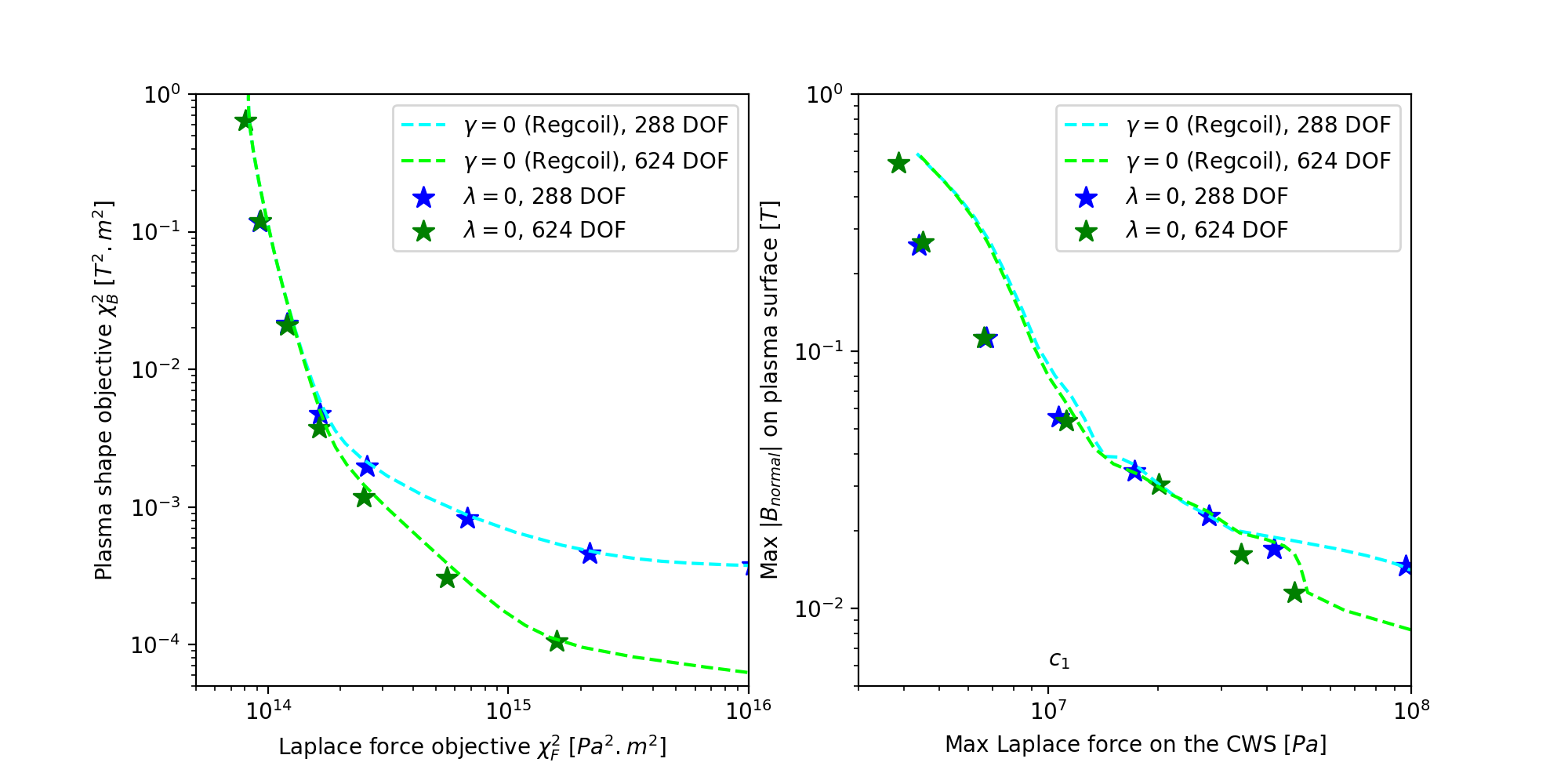}
    \captionof{figure}{
      (a) Trade-off between plasma shape accuracy and Laplace force metrics
      (as defined in Eq.~\ref{eq:chiF1}) 
      for different weightings in Eq.~\ref{eq:chi} and different
      numbers of harmonics, and thus of Degrees of Freedom (DOF). Such trade-off, 
      expected when optimizing a linear combination of $\chi^2_B$ and $\chi^2_F$ (symbols),
      is also observed in the minimization of $\chi^2_B$ and $\chi^2_j$ (curves).
      (b) Similar trade-off between maximum field and 
      maximum Laplace force (Eq.~\ref{eq:chiF2}).}
    \label{fig:regcoil_compare1}
\end{figure}

\begin{figure}
    \includegraphics[width=\textwidth]{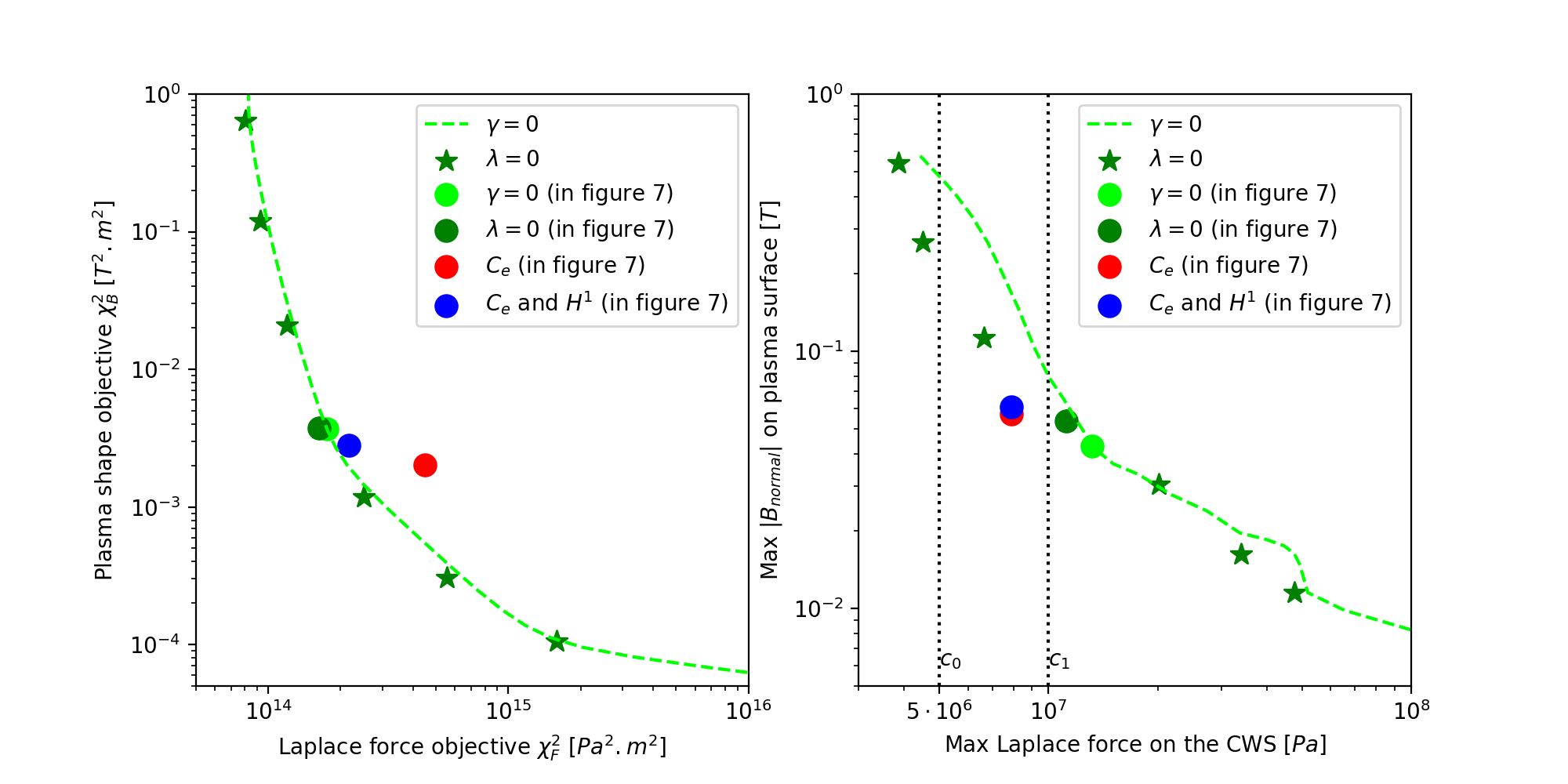}
    \captionof{figure}{
      Trade-offs between: (a) plasma shape accuracy and Laplace force metrics
      (Eq.~\ref{eq:chiF1}) and (b) maximum field and 
    maximum Laplace force (Eq.~\ref{eq:chiF2}).
    Unlike Fig.~\ref{fig:regcoil_compare1},
    all simulations here used 624 DOF. Circle symbols correspond to the four cases
    discussed in Sec.~\ref{sec:numerics} and presented 
    in Figure \ref{fig:multi_plots}. As expected, the $C_e$ cases (red and blue)
    fall between the penalized and forbidden forces $c_0$ and $c_1$ defined in
    Fig.~\ref{fig:f_e}, marked here by vertical dotted lines.}
    
    \label{fig:regcoil_compare2}
\end{figure}

\begin{figure}
    \includegraphics[width=1\textwidth]{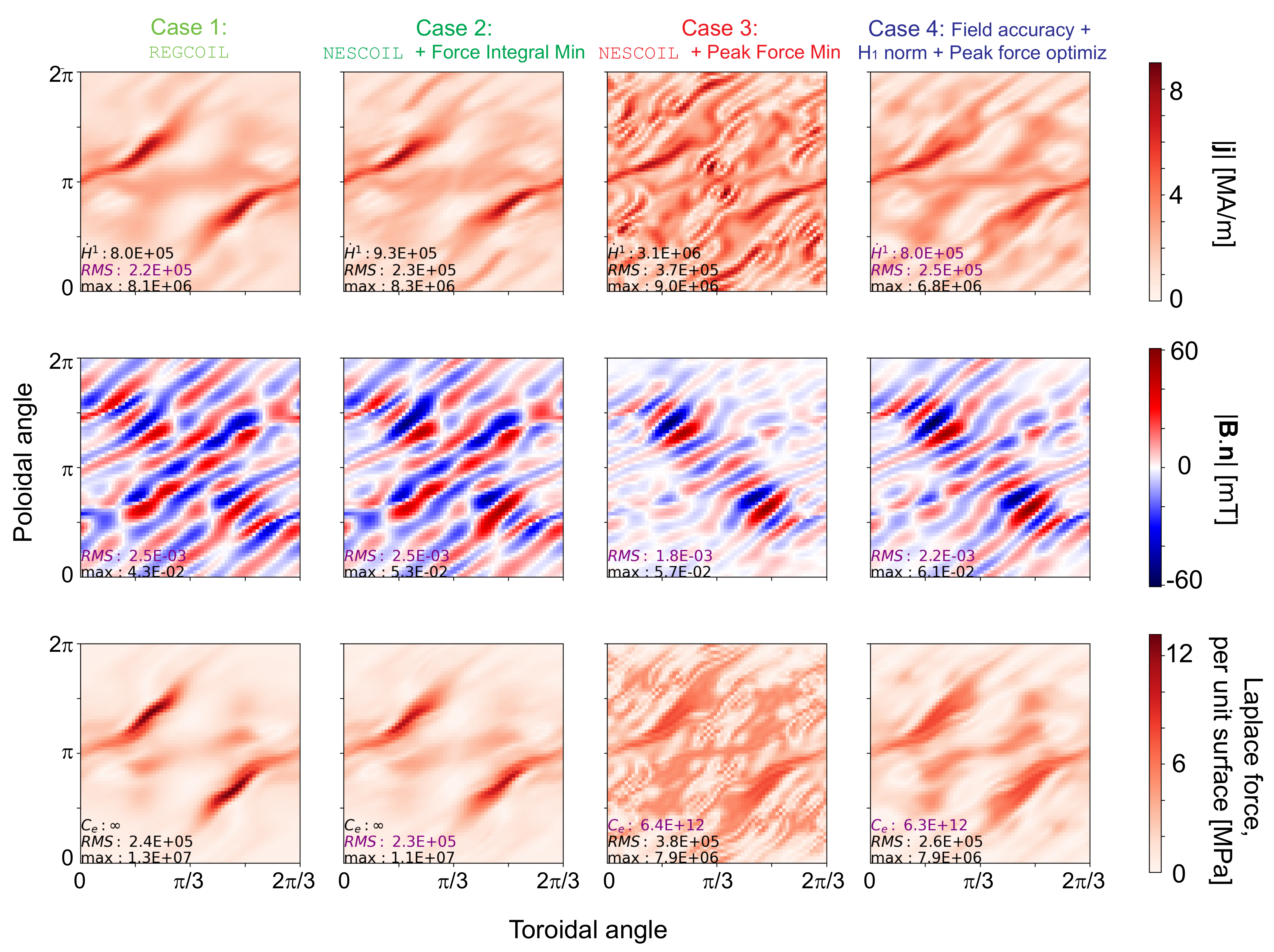}
    \captionof{figure}{ Results of minimizing Eq.~\ref{eq:chi} for NCSX, 
      for four different weight choices (Eq.~\ref{tab}).
      624 DOF are used for $\j$ in every simulation.
      From top to bottom
      the three rows refer respectively to the results for simultaneous
      current regularization (if any), field accuracy and
      force-minimization (if any).
      Shown in the legends are the 
      Root Mean Square (RMS) surface-averages and local maxima of the
      quantities plotted, as well as the $H^1$ norm of $\j$ and
      $C_e$ force metric (Eq.~\ref{eq:chiF2}). 
      The quantities actually minimized are marked in purple.}
    \label{fig:multi_plots}
\end{figure}

\begin{figure}
    \includegraphics[width=1\textwidth]{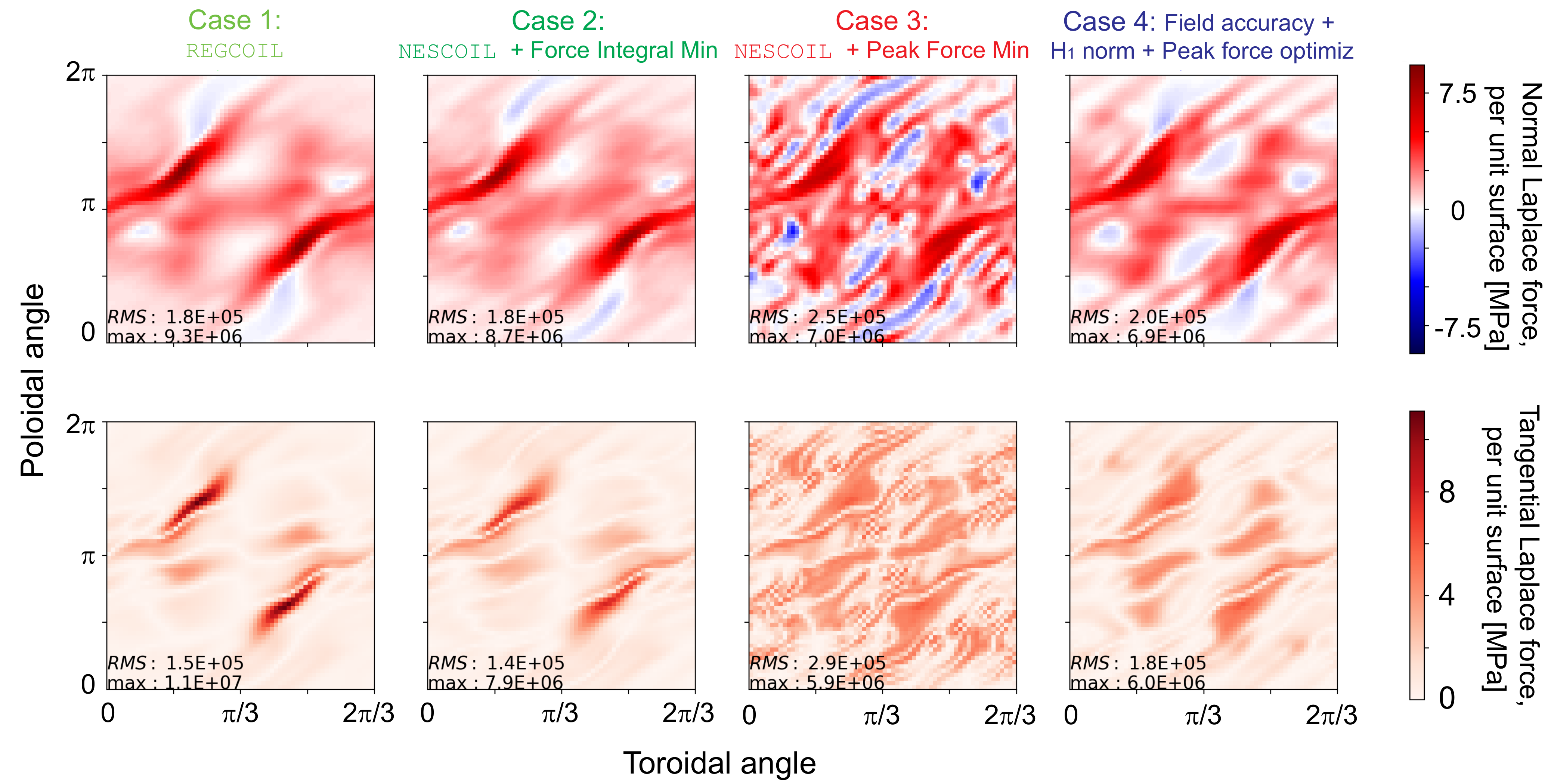}
    \captionof{figure}{Tangential and normal components of the Laplace forces of the simulations in Figure \ref{fig:multi_plots}.}
    \label{fig:multi_plots2}
\end{figure}

\begin{figure}
    \includegraphics[width=\textwidth]{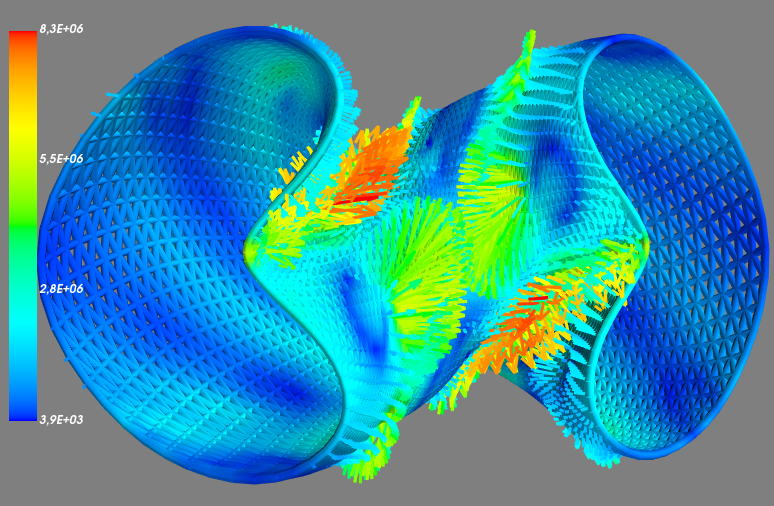}
    \captionof{figure}{The Laplace forces from the last column of Figure \ref{fig:multi_plots}. The unit for the pressure is Pascal.}
    \label{fig:3D_Visualization}
\end{figure}
\clearpage
\bibliographystyle{unsrt} 
\bibliography{fusion}

\begin{thebibliography}{10}

\bibitem{Landreman}
M.~Landreman.
\newblock An improved current potential method for fast computation of
  stellarator coil shapes.
\newblock {\em Nuclear Fusion}, 57(4):046003, April 2017.

\bibitem{Helander}
P.~Helander.
\newblock Theory of plasma confinement in non-axisymmetric magnetic fields.
\newblock {\em Reports on Progress in Physics}, 77(8):087001, jul 2014.

\bibitem{Marmar}
E.S. Marmar, S.G. Baek, H.~Barnard, P.~Bonoli, D.~Brunner, J.~Candy, et~al.
\newblock Alcator c-mod: research in support of iter and steps beyond.
\newblock {\em Nucl. Fusion}, 55:104020, 2015.

\bibitem{Pucella}
G.~{Pucella}, E.~{Alessi}, L.~{Amicucci}, B.~{Angelini}, M.~L. {Apicella},
  G.~{Apruzzese}, G.~{Artaserse}, E.~{Barbato}, F.~{Belli}, et~al.
\newblock {Overview of the FTU results}.
\newblock {\em Nuclear Fusion}, 55(10):104005, October 2015.

\bibitem{Coppi}
B.~{Coppi}, A.~{Airoldi}, R.~{Albanese}, G.~{Ambrosino}, F.~{Bombarda},
  A.~{Bianchi}, A.~{Cardinali}, G.~{Cenacchi}, E.~{Costa}, P.~{Detragiache},
  and ohers.
\newblock {New developments, plasma physics regimes and issues for the Ignitor
  experiment}.
\newblock {\em Nuclear Fusion}, 53(10):104013, October 2013.

\bibitem{Meade}
D.~Meade, S.~Jardin, C.~Kessel, J.~Mandrekas, M.~Ulrickson, et~al.
\newblock Fire, exploring the frontiers of burning plasma science.
\newblock {\em J. Plasma Fusion Res. SER.}, 5:143148, 2002.

\bibitem{Creely}
A.~J. Creely, M.~J. Greenwald, S.~B. Ballinger, D.~Brunner, J.~Canik, J.~Doody,
  T.~Fülöp, D.~T. Garnier, R.~Granetz, T.~K. Gray, and et~al.
\newblock Overview of the {SPARC} tokamak.
\newblock {\em Journal of Plasma Physics}, 86(5):865860502, 2020.

\bibitem{Sagara}
A.~Sagara, Y.~Igitkhanov, and F.~Najmabadi.
\newblock Review of stellarator/heliotron design issues towards {MFE DEMO}.
\newblock {\em Fusion Engineering and Design}, 85(7):1336 -- 1341, 2010.
\newblock Proceedings of the Ninth International Symposium on Fusion Nuclear
  Technology.

\bibitem{Queral}
V.~Queral, F.A. Volpe, D.~Spong, S.~Cabrera, and F.~Tabar\'es.
\newblock Initial exploration of high-field pulsed stellarator approach to
  ignition experiments.
\newblock {\em Journal of Fusion Energy}, 37:275, 2018.

\bibitem{TypeOne}
Type one energy.
\newblock https://www.typeoneenergy.com.

\bibitem{RenFus}
Renaissance fusion.
\newblock https://stellarator.energy.

\bibitem{Schauer}
Felix Schauer, Konstantin Egorov, and Victor Bykov.
\newblock Helias 5-b magnet system structure and maintenance concept.
\newblock {\em Fusion Engineering and Design}, 88(9):1619 -- 1622, 2013.
\newblock Proceedings of the 27th Symposium On Fusion Technology (SOFT-27);
  Liège, Belgium, September 24-28, 2012.

\bibitem{Imagawa}
S.~Imagawa, A.~Sagara, K.~Watanabe, T.~Satow, and O.~Motojima.
\newblock Magnetic field and force of helical coils for force free helical
  reactor (ffhr).
\newblock {\em J. Plasma Fusion Res. SERIES}, 5:537, 2002.

\bibitem{AlonsoRodriguez}
A.~Alonso-Rodr\'iguez, J.~Caman\~o, R.~Rodr\'guez, , A.~Valli, and P.~Venegas.
\newblock Finite element approximation of the spectrum of the curl operator in
  a multiply connected domain.
\newblock {\em Found. Comput. Math}, 1:1493, 2018.

\bibitem{Merkel}
P.~Merkel.
\newblock Solution of stellarator boundary value problems with external
  currents.
\newblock {\em Nuclear Fusion}, 27(5):867--871, May 1987.
\newblock Publisher: IOP Publishing.

\bibitem{Strickler}
Dennis Strickler, S.~Hirshman, D.~Spong, Mpilo Cole, James Lyon, B.~Nelson,
  David Williamson, and Andrew Ware.
\newblock Development of a {Robust} {Quasi}-{Poloidal} {Compact} {Stellarator}.
\newblock {\em Fusion Science and Technology}, 45:15--26, January 2004.

\bibitem{Zhu}
Caoxiang Zhu, Stuart~R. Hudson, Yuntao Song, and Yuanxi Wan.
\newblock New method to design stellarator coils without the winding surface.
\newblock {\em Nuclear Fusion}, 58(1):016008, January 2018.
\newblock arXiv: 1705.02333.

\bibitem{han_hardylittlewoodsobolev_2016}
Yazhou Han and Meijun Zhu.
\newblock Hardy-{Littlewood}-{Sobolev} inequalities on compact {Riemannian}
  manifolds and applications.
\newblock {\em Journal of Differential Equations}, 260(1):1--25, January 2016.

\bibitem{hebey_nonlinear_2000}
E.~Hebey.
\newblock {\em Nonlinear {Analysis} on {Manifolds}: {Sobolev} {Spaces} and
  {Inequalities}: {Sobolev} {Spaces} and {Inequalities}}.
\newblock Courant lecture notes in mathematics. Courant Institute of
  Mathematical Sciences, 2000.

\bibitem{Zarn}
M~C Zarnstorff, L~A Berry, A~Brooks, E~Fredrickson, G-Y Fu, S~Hirshman,
  S~Hudson, L-P Ku, E~Lazarus, D~Mikkelsen, D~Monticello, G~H Neilson,
  N~Pomphrey, A~Reiman, D~Spong, D~Strickler, A~Boozer, W~A Cooper, R~Goldston,
  R~Hatcher, M~Isaev, C~Kessel, J~Lewandowski, J~F Lyon, P~Merkel, H~Mynick,
  B~E Nelson, C~Nuehrenberg, M~Redi, W~Reiersen, P~Rutherford, R~Sanchez,
  J~Schmidt, and R~B White.
\newblock Physics of the compact advanced stellarator {NCSX}.
\newblock {\em Plasma Phys. Control. Fusion}, 43(12A):A237--A249, nov 2001.

\bibitem{Numba}
Siu~Kwan Lam, Antoine Pitrou, and Stanley Seibert.
\newblock Numba: A llvm-based python jit compiler.
\newblock In {\em Proceedings of the Second Workshop on the LLVM Compiler
  Infrastructure in HPC}, LLVM '15, New York, NY, USA, 2015. Association for
  Computing Machinery.

\bibitem{paul2020adjoint}
Elizabeth Paul.
\newblock Adjoint methods for stellarator shape optimization and sensitivity
  analysis, 2020.

\end{thebibliography}
\end{document}